\newtheorem{theorem}{Theorem}[section]
\newtheorem{lemma}[theorem]{Lemma}
\newtheorem{proposition}[theorem]{Proposition}
\theoremstyle{definition}
\newtheorem{remark}[theorem]{Remark}
\def\bal{\begin{array}{ll}}
\def\eal{\end{array}}
\def\fii{\mathfrak i}
\def\diag{\mathrm{diag}}
\def\Cat{\textrm{Cat}}
\def\CAT{\textsc{Cat}}
\def\Mot{\textrm{Mot}}
\def\Sch{\textrm{Sch}}
\def\wt{\mathrm{wt}}
\def\ea{\mathrm{ea}}
\def\enor{\mathrm{enor}}
\def\valley{\mathrm{valley}}
\def\peak{\mathrm{peak}}
\def\bZ{\mathbb{Z}}
\def\bC{\mathbb{C}}
\def\cC{\mathcal{C}}
\def\fii{\mathfrak i}
\def\N{\textbf{N}}
\def\E{\textbf{E}}
\def\D{\textbf{D}}
\numberwithin{equation}{section}
\begin{document}
\title{Identities involving weighted Catalan, Schr\"oder and Motzkin paths}

\author{Zhi Chen}
\address{Department of Mathematics, Nanjing Agricultural University,
Nanjing 210095, People's Republic of China}
\email{chenzhi@njau.edu.cn}
\author{Hao Pan}
\address{Department of Mathematics, Nanjing University, Nanjing 210093,People's Republic of China}
\email{haopan79@zoho.com}
\keywords{Motzkin numbers; Catalan numbers; Narayana numbers; Schr\"oder numbers; Motzkin numbers; Weighted lattice paths; Bijection}
\subjclass[2010]{Primary 05A15; Secondary 05A10, 05A19}
\thanks{The first author is supported by the Natural Science Foundation of Jiangsu Province (BK20160708) and the Fundamental Research Funds for the Central Universities (Y0201600166).}

\begin{abstract}
In this paper, we investigate the weighted Catalan, Motzkin and Schr\"oder numbers together with the corresponding weighted paths. The relation between these numbers is illustrated by three equations, which also lead to some known and new interesting identities. To show these three equations, we provide combinatorial proofs. One byproduct is to find a bijection between two sets of Catalan paths: one consisting of those with $k$ valleys, and the other consisting of $k\ \N$ steps in even positions.
\end{abstract}

\maketitle

\section{Introduction}
\setcounter{equation}{0}\setcounter{theorem}{0}

The {\it Catalan numbers}
\begin{equation*}
C_n:=\frac{1}{n+1}{2n \choose n}, \qquad n\geq 0,
\end{equation*}
play a very important role in combinatorics. In \cite[Exercise 6.19]{S99}, Stanley listed $66$ kinds of different combinatorial interpretations of $C_n$.
For example, $C_n$ counts all Calatan paths of order $n$, which are the plane lattice paths from $(0,0)$ to $(n,n)$ which consisting of up steps $\N:=(0,1)$ and horizontal steps $\E:=(1,0)$ and never going below the line $y=x$.
A classical result on the Catalan numbers is the Touchard's identity:
\begin{equation}\label{Touchard}
C_{n}=\sum_{k=0}^{\lfloor\frac{n-1}{2}\rfloor}\binom{n-1}{2k}
C_k2^{n-1-2k},
\end{equation}
where $\lfloor x\rfloor=\max\{n\in\bZ:\, n\leq x\}$.

Intimately related to Catalan numbers are {\it Narayana numbers}, which are defined by
\begin{equation*}
N_{n,k}:=\frac1{n}\binom{n}{k}\binom{n}{k+1},\quad 0\leq k\leq n-1.
\end{equation*}
In combinatorics, $N_{n,k}$ counts all Catalan paths of order $n$ having exactly $k$ valleys (\cite[Section 2.4.2]{P15}), and thus we have
\begin{align*}
C_n&=N(n,0)+N(n,1)+\cdots+N(n,k-1)\\
   &=\sum_{k=0}^{n-1}\frac1{n}\binom{n}{k}\binom{n}{k+1}.
\end{align*}

In \cite{C03}, Coker derived the following two identities using generating functions:
\begin{equation}\label{Coker1}
\sum_{k=0}^{n-1}\frac1{n}\binom{n}{k}\binom{n}{k+1}x^k=\sum_{k=0}^{\lfloor\frac{n-1}{2}\rfloor}\binom{n-1}{2k}C_k(1+x)^{n-1-2k},
\end{equation}
\begin{equation}\label{Coker2}
\sum_{k=0}^{n-1}\frac1{n}\binom{n}{k}\binom{n}{k+1}x^{2k}(1+x)^{2(n-1-k)}=\sum_{k=0}^{n-1}\binom{n-1}{k}
C_{k+1}x^k(1+x)^{k}.
\end{equation}
It is easy to see that the Touchard's identity (\ref{Touchard}) follows from (\ref{Coker1}) in the case when $x=1$.

Coker~\cite{C03} proposed the problems of finding combinatorial interpretations of these two identities. In~\cite{CYY08}, Chen, Yan and Yang answered Coker's problems and gave the bijective proofs of Coker's identities.
The key ingredient of their proofs is the weighted Motzkin paths. A {\it Motzkin path} of order $n$ is a lattice path from $(0,0)$ to $(n,n)$, which never passes below the line $y=x$ and consists of double up steps $\N_2=(0,2)$, double horizontal steps $\E_2:=(2,0)$ and diagonal steps $\D:=(1,1)$. The number of all Motzkin paths of order $n$ is counted by the {\it Motzkin number}
\begin{equation*}
M_n=\sum_{k=0}^{\lfloor\frac n2 \rfloor}{n\choose 2k} C_k, \quad n\geq 0.
\end{equation*}
The Motzkin numbers also have some nice combinatorial properties. For example, in \cite{DS77}, Donaghey and Shapiro listed $14$ kinds of different combinatorial interpretations of $M_n$.

A \textsl{weighted lattice path} of order $n$ is an $n$-ordered lattice path where some of the steps are assigned with weights. For example, we can obtain an {\it $(a,b)$-Motzkin path} by assigning weight $a$ to each $\D$ step and weight $b$ to each $\E_2$ step on a Motzkin path. Denote the set of all Motzkin paths and the set of all $(a,b)$-Motzkin paths of order $n$ by $\Mot(n)$ and $\Mot_n(a,b)$ respectively. The weight of a path $P$, denoted by $\wt(P)$, is the product of all the weights assigned on its steps. The weight of a set of paths is the sum of the weights of all the paths in the set. Define the {\it $(a,b)$-Motzkin number} $M_n^{(a,b)}$ to be the weight of the set $\Mot_n(a,b)$ and then we have
\begin{align}\label{Mnab}
M_n^{(a,b)}:=&\sum_{P\in \Mot_n(a,b)}\wt(P)
=\sum_{P\in \Mot_n(a,b)}a^{\diag(P)} b^{\ea_2(P)}\nonumber\\
=&\sum_{k=0}^{\lfloor n/2 \rfloor}C_k {n \choose 2k} a^{n-2k} b^k,
\end{align}
where statistics $\diag$ and $\ea_2$ respectively count the number of diagonal steps and double horizontal steps in the Motzkin path $P$.  To get equation~(\ref{Mnab}), consider the weight of the subset of $\Mot_n(a,b)$ consisting of paths with exactly $k$ $\N_2$'s steps, which equals $C_k {n \choose 2k} a^{n-2k} b^k$ since\medskip

\noindent(i) there are $(n-2k)$ diagonal steps for each path in the subset and each $\D$ has weight $a$,\medskip

\noindent(ii) there are ${n \choose n-2k}$ ways to insert diagonal steps in a Catalan path of order $2k$ to form a path in $\Mot(n)$,\medskip

\noindent(iii) there are $C_k$ such Catalan paths of order $2k$ with double up steps $\N_2$ and double horizontal steps $\E_2$ with weight $b$ on each one.\medskip

Clearly $M_n=M_n^{(1,1)}$.
Substituting $a=0$, $b=1$ in (\ref{Mnab}), we have
\begin{equation}\label{Mn01}
M_{n}^{(0,1)}=\begin{cases} C_{\frac n2},&\text{if }n\text{ is even},\\
0,&\text{if }n\text{ is odd}.
\end{cases}
\end{equation}
Furthermore, we also get an equivalent form of the Touchard identity by specializing $a=2$, $b=1$:
\begin{equation}\label{Mn21}
M_n^{(2,1)}=C_{n+1}.
\end{equation}

Motivated by (\ref{Mn01}) and (\ref{Mn21}), it is natural to ask whether there exists a common generalization of (\ref{Coker1}) and (\ref{Coker2}).
Here we  give such an identity in the following theorem using $(a,b)$-Motzkin numbers, which will be proved in section $2$.
\begin{theorem}\label{CMCorT}
Suppose that $n\geq 1$ and $a,b\in\bC$. Then
\begin{equation}
\label{CMCor}
\sum_{k=0}^{n-1}\frac{1}{n}{n \choose k}{n \choose k+1} x^{2k} b^{n-1-k}= \sum_{k=0}^{n-1}{n-1 \choose k} M_{k}^{(a,b)}x^k(x^2-ax+b)^{n-1-k}.
\end{equation}
\end{theorem}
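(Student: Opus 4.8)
The plan is to prove the identity by reinterpreting both sides as weighted path counts and then exhibiting a sign-reversing involution that annihilates every term carrying the weight $a$. The guiding observation is that the left-hand side does not involve $a$ at all, so a combinatorial proof should make the $a$-dependence of the right-hand side cancel. I would read the left-hand side through the Narayana interpretation: since $\frac1n\binom nk\binom n{k+1}=N_{n,k}$ counts the order-$n$ Catalan paths with exactly $k$ valleys, the left-hand side is the total weight of all order-$n$ Catalan paths, a path with $k$ valleys receiving weight $x^{2k}b^{n-1-k}$. Using the companion bijection advertised in the abstract, I would trade ``valleys'' for ``$\N$ steps in even positions'', since the latter statistic is the one that will mesh with the right-hand side.

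For the right-hand side I would expand each factor combinatorially. Distributing $x^k$ as one factor $x$ per step, I rewrite $M_k^{(a,b)}x^k$ as a sum over Motzkin paths $P\in\Mot(k)$ in which an $\N_2$ step has weight $x$, a $\D$ step has weight $ax$, and an $\E_2$ step has weight $bx$. The binomial coefficient $\binom{n-1}{k}$ then chooses which of $n-1$ cells carry this order-$k$ path (read left to right), while each of the remaining $n-1-k$ cells carries the signed trinomial weight $x^2-ax+b$, which I split into three sub-cells of weights $x^2$, $-ax$ and $b$. In this way the right-hand side becomes a signed sum over sequences of $n-1$ decorated cells: ``Motzkin'' cells of type $\N_2,\D,\E_2$, whose subsequence must form a genuine Motzkin path, and ``free'' cells of weight $x^2$, $-ax$ or $b$.

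The heart of the argument is a sign-reversing involution. I would scan the sequence from the left for the first cell that is either a diagonal step $\D$ (weight $ax$) or a free cell of weight $-ax$, and toggle its type between these two possibilities. Because a $\D$ step is height-neutral, inserting it into or deleting it from the Motzkin subpath preserves the condition of staying weakly above the diagonal, so the map is well defined; it fixes all cells to the left, is visibly an involution, and flips the sign since $ax\leftrightarrow -ax$. Hence every configuration containing a $\D$ or a $-ax$ cell cancels in pairs, and only configurations with no $\D$ and no $-ax$ cell survive. A survivor consists of an $\N_2/\E_2$ Dyck skeleton (weight $b^jx^{2j}$, with $C_j$ choices once $2j$ cells are occupied) interleaved with free cells of weight $x^2$ or $b$, so the right-hand side collapses to $\sum_j\binom{n-1}{2j}C_j\,b^jx^{2j}(x^2+b)^{n-1-2j}$.

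It then remains to match this survivor sum with the left-hand side. Putting $u=x^2/b$ and dividing by $b^{n-1}$, the survivor sum becomes $\sum_j\binom{n-1}{2j}C_j\,u^j(1+u)^{n-1-2j}$, which by Coker's identity~(\ref{Coker1}) equals $\sum_k N_{n,k}u^k$; multiplying back by $b^{n-1}$ recovers the left-hand side. I expect two genuinely delicate points. First, one must check that the involution always respects the Motzkin constraint; this is handled by the height-neutrality of $\D$, together with careful bookkeeping of the forced first ($\N$) and last ($\E$) steps of a Catalan path, which is what explains the shift from order $n$ to length $n-1$. Second, to make the final step truly combinatorial rather than a citation of~(\ref{Coker1}), one should replace it by an explicit bijection between the survivor objects and order-$n$ Catalan paths counted by $\N$ steps in even positions. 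This bijection is exactly the byproduct promised in the abstract, and it is where I anticipate the real combinatorial work to lie.
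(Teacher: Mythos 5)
Your proof is correct, but it takes a genuinely different route from the paper's. The paper proves \eqref{CMCor} in two algebraic steps: it first identifies the left-hand side as $b^{-1}C_n^{(b,x^2)}$ and invokes $C_n^{(a,b)}=aM_{n-1}^{(a+b,ab)}$ (equation \eqref{CM} of Theorem \ref{thm:CSM}, proved bijectively in Section 5) to rewrite it as $M_{n-1}^{(x^2+b,\,bx^2)}$; it then applies the binomial transform of Lemma \ref{Mna12L}, $M_n^{(a_1+a_2,b)}=\sum_k\binom{n}{k}M_k^{(a_1,b)}a_2^{n-k}$, with $a_1=ax$ and $a_2=x^2-ax+b$, together with the scaling $M_k^{(ax,bx^2)}=x^kM_k^{(a,b)}$, to reach the right-hand side. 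Your sign-reversing involution, cancelling $\D$ steps of weight $ax$ against free cells of weight $-ax$, is in effect a signed combinatorial proof that the right-hand side is independent of $a$ --- the same content as Lemma \ref{Mna12L} specialised to $a_1+a_2=x^2+b$, but realised by cancellation rather than by the paper's positive recolouring-and-contraction of $\D$ steps; it is sound, since $\D$ is height-neutral and the first-offending-cell rule makes the map an involution. Your survivor sum is exactly the $a=0$ case of the right-hand side, and since both sides are homogeneous of degree $n-1$ in $(x^2,b)$, reducing to Coker's identity \eqref{Coker1} is legitimate and non-circular ( \eqref{Coker1} has independent proofs in the cited references); but note the paper's logical flow is the reverse --- it obtains \eqref{Coker1} as the $a=0$, $b=1$ specialisation of \eqref{CMCor} --- so a fully self-contained version of your argument does require the bijection you defer to the end, essentially the Chen--Yan--Yang bijection or the maps $\phi$, $\psi$ of Sections 4 and 5 at $a=0$. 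One minor point: the ``forced first $\N$ and last $\E$ steps of a Catalan path'' play no role in your involution, which lives entirely on the right-hand side; the shift from $n$ to $n-1$ is already built into the statement. What your approach buys is a proof of Theorem \ref{CMCorT} that bypasses Theorem \ref{thm:CSM} entirely; what the paper's approach buys is that the single bijective identity \eqref{CM} simultaneously yields \eqref{CMCor} and the further consequences of Section 3.
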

Clearly (\ref{Coker1}) follows from (\ref{CMCor}) in the case when $a=0$, $b=1$. Noticing that $M_k^{(2(1+x),(1+x)^2)}=M_k^{(2,1)}(1+x)^k$ and by specializing  $a=2(1+x)$, $b=(1+x)^2$, we get (\ref{Coker2}).

Another important combinatorial sequence arising from the lattice path enumeration consists of the {\it Schr\"oder numbers} $S_n$, which counts the number of all Schr\"oder paths of order $n$. An $n$-ordered {\it Schr\"oder path} is a lattice path from $(0,0)$ to $(n,n)$, which never passes below the line $y=x$ and consist of up steps $\N=(0,1)$, horizontal steps $\E:=(1,0)$ and diagonal steps $\D:=(1,1)$.
We mention that $\binom{n+k}{k}C_{k}$ counts all Schr\"oder paths of order $n$ having exactly $k$ $\E$ steps, since\medskip

\noindent(i) a Schr\"oder paths of order $n$ has exactly $k$ $\E$ steps if and only if
it contains exactly $(n+k)$ steps,\medskip

\noindent(ii) there are $\binom{n+k}{n-k}$ ways to choose $(n-k)$ $\D$ steps,\medskip

\noindent(iii) removing all diagonal steps we get a Catalan path of order $k$.\medskip

Therefore we have
\begin{equation*}
S_n=\sum_{k=0}^n\binom{n+k}{k}C_{k}, \quad n\geq 0.
\end{equation*}
Furthermore, we also have the identity
$S_{n}=2M_{n-1}^{(3,2)}$ \cite{CW12, Y07}.
Motivated by (\ref{CMCor}), we obtain the following theorem.
\begin{theorem}\label{SMCorT}
Suppose that $n\geq 1$ and $a,b\in\bC$. Then
\begin{equation}\label{SMCor}
\sum_{k=0}^{n}\binom{n+k}{2k}C_k x^{2k}(b-x^2)^{n-k}=
b\sum_{k=0}^{n-1}\binom{n-1}{k}M_k^{(a,b)}x^k(x^2-ax+b)^{n-1-k}.
\end{equation}
\end{theorem}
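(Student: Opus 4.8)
The plan is to use Theorem~\ref{CMCorT} to dispose of the right-hand side of \eqref{SMCor} and then to prove the remaining identity combinatorially. The point is that the right-hand side of \eqref{SMCor} is literally $b$ times the right-hand side of \eqref{CMCor}; applying Theorem~\ref{CMCorT} therefore turns it into $\sum_{k=0}^{n-1}\frac1n\binom nk\binom n{k+1}x^{2k}b^{n-k}$. Thus \eqref{SMCor} is equivalent to
\begin{equation}\label{eq:star}
\sum_{k=0}^{n}\binom{n+k}{2k}C_k x^{2k}(b-x^2)^{n-k}=\sum_{k=0}^{n-1}\frac1n\binom nk\binom n{k+1}x^{2k}b^{n-k},
\end{equation}
and it is \eqref{eq:star} that I would establish directly; this is where the real work lies.

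To read the left-hand side of \eqref{eq:star} as a weighted path count, I would weight each Schr\"oder path of order $n$ by giving every \N\ and every \E\ step the weight $x$ and every \D\ step the weight $b-x^2$. Since there are exactly $\binom{n+k}{2k}C_k$ Schr\"oder paths of order $n$ with $k$ horizontal steps (equivalently $k$ up steps and $n-k$ diagonal steps), the total weight of all Schr\"oder paths of order $n$ is precisely the left-hand side of \eqref{eq:star}. Expanding $(b-x^2)^{\#\D(P)}=\sum_{S}b^{\#\D(P)-|S|}(-x^2)^{|S|}$ over subsets $S$ of the diagonal steps of $P$ recasts this as a signed sum over pairs $(P,S)$, where $S$ is a set of \emph{marked} diagonal steps, the pair carrying weight $(-1)^{|S|}x^{2(\#\E(P)+|S|)}b^{\#\D(P)-|S|}$.

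The key step is a sign-reversing involution on these pairs. Call a site of $(P,S)$ \emph{active} if it is either a marked \D\ step or a peak, i.e. an \N\ step immediately followed by an \E\ step. I would toggle the leftmost active site: a marked \D\ is replaced by the peak \N\E\ and removed from $S$, while a leftmost peak \N\E\ is contracted to a marked \D\ and added to $S$. A short check shows this map preserves the weight and reverses the sign, so on the non-fixed points the contributions cancel in pairs; its only fixed points are the pairs with no active site, namely the peak-free Schr\"oder paths with $S=\emptyset$. Consequently the left-hand side of \eqref{eq:star} collapses to $\sum_{P}x^{2\#\E(P)}b^{\#\D(P)}$, summed over peak-free Schr\"oder paths $P$ of order $n$.

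Finally I would identify this sum with the Narayana side. Replacing each \D\ step of a peak-free Schr\"oder path by \N\E\ produces a Catalan path of order $n$; because the original path has no peak, no new peak is created at a junction between consecutive steps, so the peaks of the resulting Catalan path are exactly the images of the diagonal steps. Hence peak-free Schr\"oder paths of order $n$ with $k$ horizontal steps (so $n-k$ diagonal steps) correspond bijectively to Catalan paths of order $n$ with $n-k$ peaks, equivalently $n-k-1$ valleys, of which there are $N_{n,n-k-1}=\frac1n\binom{n}{n-k-1}\binom{n}{n-k}=\frac1n\binom nk\binom n{k+1}$ by the symmetry of the Narayana numbers. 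Summing the weight $x^{2k}b^{n-k}$ over $k$ gives the right-hand side of \eqref{eq:star}. The main obstacle, deserving the most care, is checking that toggling the leftmost active site is a genuine involution — one must verify that no active site appears to the left of the toggled one and that the two widths (a length-$1$ diagonal versus a length-$2$ peak) are handled consistently; the junction-peak check in the final bijection is then a routine special case of the same argument.
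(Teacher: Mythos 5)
Your proof is correct, but it takes a genuinely different route from the paper's. The paper's argument is short and purely algebraic: it recognizes the left-hand side as $S_n^{(b-x^2,x^2)}$ via \eqref{Snab}, applies \eqref{SM} to convert this to $bM_{n-1}^{(b+x^2,bx^2)}$, and then expands with Lemma~\ref{Mna12L} and the homogeneity $M_k^{(ax,bx^2)}=x^kM_k^{(a,b)}$ to reach the right-hand side. You instead invoke Theorem~\ref{CMCorT} to replace the right-hand side by $\sum_k\frac1n\binom nk\binom n{k+1}x^{2k}b^{n-k}$ and prove the remaining identity directly. That remaining identity is, in the paper's notation, $S_n^{(b-x^2,x^2)}=C_n^{(b,x^2)}$, which by the Narayana symmetry $u\,C_n^{(v,u)}=v\,C_n^{(u,v)}$ is exactly an instance (and, having two free parameters, an equivalent form) of \eqref{SC}; so in effect you have supplied an independent combinatorial proof of \eqref{SC}, replacing the paper's unsigned bijection $\Sch_n(a,b)\to\overline{\Cat}_n(b,\{a,b\})$ (which splits the peak weight $a+b$) by a signed expansion of $(b-x^2)^{\#\D}$ whose involution cancels everything except peak-free Schr\"oder paths, counted by Narayana numbers via the peak statistic. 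Your involution is sound: active sites are pairwise disjoint, the prefix before the leftmost one is untouched by the toggle, and no new peak can arise at the left junction because the first new step is \N{} (resp.\ \D), so the toggled site stays leftmost and the map reverses $(-1)^{|S|}$ while preserving $x^{2(\#\E+|S|)}b^{\#\D-|S|}$; the final peak-contraction bijection is likewise correct since distinct $\N\E$ factors cannot overlap and, the path being peak-free, no new $\N\E$ factor is created. What each route buys: the paper's is shorter and uniform with its proof of Theorem~\ref{CMCorT}, while yours explains combinatorially why the alternating weight $(b-x^2)^{n-k}$ collapses to the Narayana polynomial and yields a bijective proof of \eqref{SC} as a byproduct. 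Two small points worth making explicit: your argument still leans on Theorem~\ref{CMCorT} (hence on \eqref{CM} and Lemma~\ref{Mna12L}), so it is not fully self-contained; and the $k=n$ term on the left vanishes after the involution because every nonempty Catalan path has a peak, matching $\frac1n\binom nn\binom n{n+1}=0$ on the right.
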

Setting $a=0$, $b=1$ and replacing $x^2$ by $x$, we get a special case of~(\ref{SMCor}) as the following identity:
\begin{equation}\label{SMabx1}
\sum_{k=0}^{n}\binom{n+k}{2k}C_k x^{k}(1-x)^{n-k}=\sum_{k=0}^{\lfloor\frac{n-1}{2}\rfloor}\binom{n-1}{2k}
C_kx^{k}(1+x)^{n-1-2k}.
\end{equation}
Similarly, letting $a=2(1+x)$ and $b=(1+x)^2$, we can get
\begin{equation}\label{SMabx2}
\sum_{k=0}^{n}\binom{n+k}{2k}C_k x^{2k}(1+2x)^{n-k}=\sum_{k=0}^{n-1}\binom{n-1}{k}
C_{k+1}x^k(1+x)^{k+2}.
\end{equation}
In particular, setting $x=\frac12$ in (\ref{SMabx1}), we have
\begin{equation}
S_n=\sum_{k=0}^{\lfloor\frac{n-1}{2}\rfloor}\binom{n-1}{2k}
C_k2^{k+1}3^{n-1-2k},
\end{equation}
which is exactly the identity $S_n=2M_{n-1}^{(3,2)}$.

In order to show Theorem~\ref{CMCorT} and~\ref{SMCorT}, we need to use the weighted Catalan numbers and
the weighted Schr\"oder numbers. An {\it $(a,b)$-Schr\"oder path} of order $n$ is a Schr\"oder path $P$ of order $n$, whose
$\D$ steps are assigned with weight $a$ and $\E$ steps are assigned with weight $b$. We denote by $\Sch_n(a,b)$ the set of all $(a,b)$-Schr\"oder paths of order $n$.
Define the $n^{th}$ {\it $(a,b)$-Schr\"oder number} $S_n^{(a,b)}$ to be the weight of the set $\Sch_n(a,b)$ as follows
\begin{equation*}
S_n^{(a,b)}:=\sum_{P\in\Sch_n(a,b)}\wt(P).
\end{equation*}
Let $\ea(P)$ denote the numbers of $\E$ steps of $P$. Since the number of $n$-th Schr\"{o}der paths having $k$ diagonal steps is given by $\frac{1}{n}{n\choose k}{2n-k\choose n-1}$ for $0\leq k\leq n$~(\cite{BSS93}), we have
\begin{align}\label{Snab}
S_n^{(a,b)}&=\sum_{P\in\Sch_n(a,b)}a^{\diag(P)}b^{\ea(P)}\nonumber\\
           &=\sum_{k=0}^n\binom{n+k}{2k} C_{k}a^{n-k}b^{k}.
\end{align}

Assume that $P$ is a Catalan path of order $n$. We may write $P=p_1p_2\cdots p_{2n}$, where $p_1,\ldots,p_{2n}\in\{\N, \E\}$.
If $p_i=\E$ and $p_{i+1}=\N$, then $p_ip_{i+1}$ forms a valley of $P$.
By assigning each $\E$ step of $P$ which is followed immediately by an $\N$ step with weight $b$, and each other $\E$ step with weight $a$, we get the so called {\it valley type $(a,b)$-Catalan path}.
The set of all valley type $(a,b)$-Catalan paths of order $n$ is denoted by $\Cat_n(a,b)$. The corresponding $n^{th}$ {\it valley type $(a,b)$-Catalan number} $C_n^{(a,b)}$ is defined to be the weight of $\Cat_n(a,b)$ as follows
\begin{equation*}
C_n^{(a,b)}:=\sum_{P\in\Cat_n(a,b)}\wt(P).
\end{equation*}

Since the number of $n$-ordered Catalan paths having exactly $k$ valleys coincides with the Narayana number $N(n,k)$,
we get
\begin{align}\label{Cnab}
C_n^{(a,b)}&=\sum_{P\in\Cat_n(a,b)}a^{n-\valley(P)}b^{\valley(P)}\nonumber\\
           &=\sum_{k=0}^{n-1}\frac{1}{n}\binom{n}{k}\binom{n}{k+1}a^{n-k}b^{k},
\end{align}
where
\begin{equation*}
\valley(P)=\#\{1\leq i\leq n-1:\,p_i=\E,\ p_{i+1}=\N\}.
\end{equation*}

The main result in this paper is to establish the identities between $C_n^{(a,b)}$, $M_n^{(a,b)}$ and $S_n^{(a,b)}$, which are given in the following theorem.
\begin{theorem}\label{thm:CSM}
Suppose that $a,b\in\bC$, then we have
\begin{equation}\label{CM}
C_n^{(a,b)}= a M_{n-1}^{(a+b,ab)},
\end{equation}
\begin{equation}\label{SC}
b S_n^{(a,b)}= (a+b) C_n^{(b,a+b)},
\end{equation}
\begin{equation}\label{SM}
S_{n}^{(a,b)}= (a+b) M_{n-1}^{(a+2b,ab+b^2)}.
\end{equation}
\end{theorem}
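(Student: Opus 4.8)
The plan is to reduce the three identities to two, and to prove those two by matching quadratic functional equations for the associated ordinary generating functions. First, observe that (\ref{SM}) is a formal consequence of (\ref{CM}) and (\ref{SC}): applying (\ref{CM}) with $(a,b)$ replaced by $(b,a+b)$ gives $C_n^{(b,a+b)}=b\,M_{n-1}^{(a+2b,\,ab+b^2)}$, and inserting this into (\ref{SC}) yields $b\,S_n^{(a,b)}=(a+b)\,b\,M_{n-1}^{(a+2b,\,ab+b^2)}$; dividing by $b$ gives (\ref{SM}). Hence it suffices to establish (\ref{CM}) and (\ref{SC}).

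For these I would pass to generating functions in the order $n$, writing $F_{a,b}(t)=\sum_{n\ge0}C_n^{(a,b)}t^n$, $G_{a,b}(t)=\sum_{n\ge0}M_n^{(a,b)}t^n$ and $H_{a,b}(t)=\sum_{n\ge0}S_n^{(a,b)}t^n$. The standard first-return decompositions of Motzkin and Schr\"oder paths give
\[
G_{a,b}=1+a\,t\,G_{a,b}+b\,t^2G_{a,b}^2,\qquad H_{a,b}=1+a\,t\,H_{a,b}+b\,t\,H_{a,b}^2 .
\]
Decomposing a nonempty Catalan path as $P=\N A\,\E\,B$ with $A,B$ Catalan, and tracking the valley weighting, produces
\[
F_{a,b}=1+(a-b)\,t\,F_{a,b}+b\,t\,F_{a,b}^2 ,
\]
the key point being that the closing $\E$ after $A$ is a valley (weight $b$) exactly when $B$ is nonempty and is the final step (weight $a$) when $B$ is empty, while the valley weights internal to $A$ and $B$ are unaffected by the elevation.

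With these three quadratics in hand both identities are routine. For (\ref{CM}) I substitute the ansatz $F_{a,b}=1+a\,t\,G_{a+b,\,ab}$ into the Catalan equation and reduce using the Motzkin equation for $G_{a+b,ab}$ to see that it holds identically; comparing coefficients of $t^n$ for $n\ge1$ gives $C_n^{(a,b)}=a\,M_{n-1}^{(a+b,ab)}$. For (\ref{SC}) I substitute $F_{b,a+b}=(a+b\,H_{a,b})/(a+b)$ into the Catalan equation with parameters $(b,a+b)$ and reduce using the Schr\"oder equation; the coefficients of $t^n$ for $n\ge1$ then read off as $b\,S_n^{(a,b)}=(a+b)\,C_n^{(b,a+b)}$, the constant $a$ in the substitution accounting for the discrepancy of the $n=0$ terms.

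The main obstacle is the Catalan functional equation and, more pointedly, its combinatorial content. Reading the ansatz for (\ref{CM}) backwards is precisely a fold of a Catalan path of order $n$ into a Motzkin path of order $n-1$: the forced final $\E$ contributes the factor $a$, the two blocks $\N\E$ and $\E\N$ both fold to a $\D$ (yielding its weight $a+b$), $\E\E$ folds to an $\E_2$ (weight $ab$), and $\N\N$ folds to an $\N_2$. Turning this into a genuine weight-preserving bijection is delicate because a valley can straddle two consecutive folding blocks, so the valley weight does not factor over blocks directly; this is exactly where the equidistribution of valleys with $\N$-steps in even positions—the bijection advertised in the abstract—is used to relocate the weight onto single positions before folding. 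I expect that justifying this relocation (equivalently, the valley bookkeeping in the first-return decomposition above) is the crux, after which (\ref{CM}), (\ref{SC}) and hence (\ref{SM}) follow mechanically.
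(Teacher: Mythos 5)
Your proposal is correct, but it takes a genuinely different route from the paper's. The reduction of (\ref{SM}) to (\ref{CM}) and (\ref{SC}) is exactly the paper's (stated as a remark at the end of Section 5). For (\ref{CM}) and (\ref{SC}) themselves, however, you argue via first-return decompositions and quadratic functional equations: your three equations are right --- in particular the valley bookkeeping for $P=\N A\E B$, where the closing $\E$ carries weight $b$ precisely when $B\neq\emptyset$ and weight $a$ otherwise, correctly yields $F_{a,b}=1+(a-b)tF_{a,b}+btF_{a,b}^2$ --- and both substitutions $F_{a,b}=1+atG_{a+b,ab}$ and $F_{b,a+b}=(a+bH_{a,b})/(a+b)$ do check out against the Motzkin and Schr\"oder equations, so the identities follow once one invokes uniqueness of the power-series solution with constant term $1$ and disposes of the case $a+b=0$ by polynomial continuation. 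The paper instead insists on purely bijective proofs: for (\ref{CM}) it first shows by an explicit bijection that valleys are equidistributed with $\N$ steps in even positions (Theorem~\ref{thmCC}), and then folds the pairs $p_{2i}p_{2i+1}$ of an even-north weighted Catalan path into the steps $\N_2$, $\E_2$, $\D$ of a $(\{a,b\},a,b)$-Motzkin path of order $n-1$ --- precisely the folding you sketch in your closing paragraph, with the equidistribution lemma performing the ``relocation'' of the valley weight that you correctly flag as the delicate point; for (\ref{SC}) it passes to peak-type Catalan paths via $\peak(P)=\valley(P)+1$ and uses the classical substitution exchanging each $\D$ step with an adjacent $\E\N$ pair. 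Your route is shorter and mechanical; the paper's route is longer but produces the valley/even-position-$\N$ bijection advertised in the abstract as a contribution in its own right, and keeps the argument entirely combinatorial, which is the authors' stated aim.
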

It is worth mentioned that Theorem~\ref{thm:CSM} not only generalizes the known facts that $C_n=M_{n-1}^{(2,1)}$ and $S_n=2M_{n-1}^{(3,2)}$, but also can be used to show Theorem \ref{CMCorT} and \ref{SMCorT}.
Furthermore, as we shall see later in Section 3, more explicit expressions involving $C_n$, $S_n$ and $M_n$ can be deduced from Theorem \ref{thm:CSM}.

This paper is organized as follows.
In Section 2, we prove how Theorem \ref{thm:CSM} implies  Theorems \ref{CMCorT} and \ref{SMCorT}.
In Section 3, we give some applications of Theorems \ref{CMCorT}-\ref{thm:CSM}, which lead to some known and new identities.
Sections 4 and 5 are the main part of this paper. In these two sections we are going to give combinatorial proofs for (\ref{CM}) and (\ref{SC}) in Theorem~\ref{thm:CSM}, which lead to (\ref{SM}). We first give a bijection between the set of Catalan paths with $k$ valleys and that with $k$ $\N$ steps in even positions in Section 4. This allows us to give another combinatorial interpretation of $C_n^{(a,b)}$. In Section 5, we give the combinatorial proof of Theorem \ref{thm:CSM}.

\section{Proofs of Theorem \ref{CMCorT} and \ref{SMCorT} using Theorem \ref{thm:CSM}}
\setcounter{equation}{0}\setcounter{theorem}{0}

In this section, we prove Theorems \ref{CMCorT} and \ref{SMCorT} under the assumption that Theorem \ref{thm:CSM} holds. To do that, we need the following lemma.
\begin{lemma}\label{Mna12L}
For $n\geq 0$ and $a_1,a_2,b\in\bC$,
\begin{equation}\label{Mna12}
M_n^{(a_1+a_2,b)}=\sum_{k=0}^n\binom{n}{k}M_k^{(a_1,b)}a_2^{n-k}.
\end{equation}
\end{lemma}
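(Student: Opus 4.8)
The plan is to prove the identity bijectively, reading both sides as weighted enumerations of $(a_1+a_2,b)$-Motzkin paths of order $n$. The first thing I would record is a bookkeeping fact: each of the three step types $\N_2=(0,2)$, $\E_2=(2,0)$, $\D=(1,1)$ increases $x+y$ by exactly $2$, so a Motzkin path reaching $(m,m)$ consists of precisely $m$ steps. This is what will manufacture the binomial coefficient $\binom{n}{k}$.

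For the left-hand side I would expand the weight $a_1+a_2$ carried by each $\D$ step using distributivity. Writing $\prod_{\D}(a_1+a_2)=\sum_{\text{colorings}}\prod_{\D}a_{\text{color}}$, one sees that $M_n^{(a_1+a_2,b)}$ is the total weight of the set $\widetilde{\Mot}_n$ of order-$n$ Motzkin paths in which every $\D$ step is colored with one of two colors, a color-$1$ step carrying weight $a_1$, a color-$2$ step carrying weight $a_2$, and each $\E_2$ step carrying weight $b$.

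Next I would partition $\widetilde{\Mot}_n$ according to the number $n-m$ of color-$2$ diagonal steps. Deleting all color-$2$ steps (each deletion translates the remaining suffix by $(-1,-1)$, leaving $y-x$ unchanged, hence preserving $y\geq x$) yields a Motzkin path $Q$ of order $m$ all of whose diagonals carry weight $a_1$; as $Q$ runs over $\Mot_m(a_1,b)$ its weights sum to $M_m^{(a_1,b)}$. Conversely, to reconstruct a member of $\widetilde{\Mot}_n$ I reinsert $n-m$ indistinguishable color-$2$ diagonals into $Q$. Since $Q$ has $m$ steps there are $m+1$ slots, and inserting a $\D$ into any slot keeps the path weakly above $y=x$, so every insertion is legal; the number of insertions is the number of size-$(n-m)$ multisets from $m+1$ slots, namely $\binom{(n-m)+(m+1)-1}{n-m}=\binom{n}{m}$, and the inserted steps contribute $a_2^{n-m}$. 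Because deletion and weighted insertion are mutually inverse, summing over $m$ gives $\sum_{m=0}^n\binom{n}{m}M_m^{(a_1,b)}a_2^{n-m}$, which is the right-hand side after renaming the index to $k$.

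The step I expect to need the most care is the insertion count: one must confirm that a color-$2$ diagonal can be placed in any of the $m+1$ slots without breaking the lattice-path constraint, and that—since the color-$2$ diagonals are identical—recording only the slot multiplicities makes deletion and insertion genuine inverses, giving precisely the stars-and-bars value $\binom{n}{m}$. As an independent check I would also verify the identity algebraically: substituting $M_m^{(a_1,b)}=\sum_k C_k\binom{m}{2k}a_1^{m-2k}b^k$ into the right-hand side, exchanging the order of summation, applying the subset-of-a-subset identity $\binom{n}{m}\binom{m}{2k}=\binom{n}{2k}\binom{n-2k}{m-2k}$, and summing the inner binomial series to $(a_1+a_2)^{n-2k}$ recovers $\sum_k C_k\binom{n}{2k}(a_1+a_2)^{n-2k}b^k=M_n^{(a_1+a_2,b)}$.
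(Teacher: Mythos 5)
Your proof is correct and follows essentially the same route as the paper: both expand the weight $a_1+a_2$ on each $\D$ step by distributivity to obtain two-colored diagonals, then set up a weight-preserving correspondence by deleting (resp.\ reinserting) the $a_2$-colored diagonal steps. The only cosmetic difference is that you count reinsertions by stars-and-bars over the $m+1$ gaps of the order-$m$ path, while the paper chooses the $n-k$ positions of the $a_2$-diagonals among the $n$ steps of the full path; both give $\binom{n}{k}$, and your added algebraic verification is a sound cross-check.
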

\begin{proof}
Let $\Mot_n(\{a_1,a_2\},b)$ denote the set of all Motzkin paths of order $n$, where each $\E_2$ step is assigned with weight $b$ and each $\D$ step is assigned with weight either $a_1$ or $a_2$. The paths in $\Mot_n(\{a_1,a_2\},b)$ are called $(\{a_1,a_2\},b)$-Motzkin paths.

For an $(a_1+a_2, b)$-Motzkin path $P$ where each $\D$ step is assigned with weight $a_1+a_2$, by splitting into the summation of $a_1$ and $a_2$ we can assign each $\D$ step by the weight either $a_1$ or $a_2$. In this way we get $2^{\diag(P)}$ $(\{a_1,a_2\},b)$-Motzkin paths.

Consider the set of all $(\{a_1,a_2\},b)$-Motzkin paths of order $n$.
There are $\binom{n}{n-k}$ ways to choose $n-k$ $\D$ steps among all the $n$ steps, which are assigned with weight $a_2$ for each one. Each of the rest  $\D$ steps is assigned with weight $a_1$. Each $\E_2$ step is assigned with weight $b$.
Thus we can reduce a $k$-ordered $(a_1, b)$-Motzkin paths by removing all $\D$ steps weighted by $a_2$ and get
\begin{align*}
M_n^{(a_1+a_2,b)}=&\sum_{P\in\Mot_n(a_1+a_2,b)}\wt(P)=\sum_{Q\in\Mot_n(\{a_1,a_2\},b)}\wt(Q)\\
=&\sum_{k=0}^{n}\binom{n}{n-k}a_2^{n-k}\sum_{R\in\Mot_k(a_1,b)}\wt(R)\\
=&\sum_{k=0}^{n}\binom{n}{n-k}a_2^{n-k}M_k^{(a_1,b)}.
\end{align*}
\end{proof}
\begin{remark}
There are some easy consequences of Lemma \ref{Mna12L}. Since $M_n^{(ax, bx^2)}=x^nM_n^{(a, b)}$ implies $M_n^{(a,b)}=(-1)^nM_n^{(-a,b)}$, by Lemma~\ref{Mna12L}  we have
\begin{equation}
M_n^{(a,b)}=M_n^{(2a-a, b)}=\sum_{k=0}^n\binom{n}{k}M_k^{(-a,b)}(2a)^{n-k}=(-1)^n\sum_{k=0}^n\binom{n}{k}M_k^{(a,b)}(-2a)^{n-k}.
\end{equation}
In particular, since $M_{n}=M_n^{(1,1)}$, $C_{n+1}=M_n^{(2,1)}$ and $S_{n+1}=2M_n^{(3,2)}$, by specializing $a$ and $b$ to be the corresponding values in the parenthesis we get
\begin{align}
&M_n=(-1)^n\sum_{k=0}^n(-2)^{n-k}\binom{n}{k}M_k,\\
&C_{n+1}=(-1)^n\sum_{k=0}^n(-4)^{n-k}\binom{n}{k}C_{k+1},\\
&S_{n+1}=(-1)^n\sum_{k=0}^n(-6)^{n-k}\binom{n}{k}S_{k+1}.\label{SSbinom}
\end{align}
\end{remark}
Now we are ready to prove Theorems \ref{CMCorT} and \ref{SMCorT}. From (\ref{Cnab}) and (\ref{CM}) we know that
\begin{align*}
\sum_{k=0}^{n-1}\frac{1}{n}{n \choose k}{n \choose k+1} x^{2k} b^{n-1-k}=
b^{-1}C_n^{(b,x^2)}=M_{n-1}^{(x^2+b,bx^2)}.
\end{align*}

Using Lemma \ref{Mna12L} and the identity $M_n^{(ax, bx^2)}=x^nM_n^{(a, b)}$ again, we get
\begin{align*}
M_{n-1}^{(x^2+b,bx^2)}=&\sum_{k=0}^{n-1}\binom{n-1}{k} M_k^{(ax,bx^2)}(x^2-ax+b)^{n-k}\\
=&
\sum_{k=0}^{n-1}\binom{n-1}{k} M_k^{(a,b)}x^k(x^2-ax+b)^{n-k}.
\end{align*}

Therefore Theorems \ref{CMCorT} is obtained.

Similarly, from (\ref{Snab}) and (\ref{SM}), we have
\begin{align*}
\sum_{k=0}^{n}\binom{n+k}{2k}C_k x^{2k}(b-x^2)^{n-k}=&S_n^{(b-x^2,x^2)}=bM_{n-1}^{(b+x^2,bx^2)}\\
=&b\sum_{k=0}^{n-1}\binom{n-1}{k}M_k^{(ax,bx^2)}(x^2-ax+b)^{n-1-k}\\
=&b\sum_{k=0}^{n-1}\binom{n-1}{k}M_k^{(a,b)}x^k(x^2-ax+b)^{n-1-k}.
\end{align*}
Therefore Theorem \ref{SMCorT} holds.
\qed

\section{More consequences of Theorems \ref{CMCorT}-\ref{thm:CSM}}
\setcounter{equation}{0}\setcounter{theorem}{0}

In this section, we shall list more identities involving $C_n$, $M_n$ and $S_n$, which follows from Theorems \ref{CMCorT}-\ref{thm:CSM}.

\begin{proposition} For $a,b\in\bC$,
\begin{align}
&\sum_{k=0}^{n-1}\binom{n-1}{k}C_{k+1}^{(a,b)}x^{k}(x-a)^{n-1-k}(x-b)^{n-1-k}\notag\\
=&\sum_{k=0}^{n-1}\frac{1}{n}\binom{n}{k}\binom{n}{k+1}x^{2k}a^{n-k}b^{n-1-k}\label{CCab}\\
=&\frac{1}{b}\sum_{k=0}^{n}\binom{n+k}{2k}C_k x^{2k}(ab-x^2)^{n-k}.\label{CCab2}
\end{align}
\end{proposition}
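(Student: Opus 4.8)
The plan is to show that all three quantities in the proposition coincide with a single weighted Motzkin sum. Introduce the notation
\[
R_n(a,b):=\sum_{k=0}^{n-1}\binom{n-1}{k}M_k^{(a,b)}x^k\bigl(x^2-ax+b\bigr)^{n-1-k},
\]
which is exactly the right-hand side of Theorem~\ref{CMCorT} and, after multiplication by $b$, of Theorem~\ref{SMCorT}. Since $(x-a)(x-b)=x^2-(a+b)x+ab$, applying (\ref{CM}) in the form $C_{k+1}^{(a,b)}=aM_k^{(a+b,ab)}$ to the first display turns it into
\[
\sum_{k=0}^{n-1}\binom{n-1}{k}aM_k^{(a+b,ab)}x^k\bigl((x-a)(x-b)\bigr)^{n-1-k}=a\,R_n(a+b,ab).
\]
It therefore suffices to verify that the middle and right-hand expressions also equal $a\,R_n(a+b,ab)$.

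For the middle expression (\ref{CCab}), I would invoke Theorem~\ref{CMCorT} under the substitution $(a,b)\mapsto(a+b,ab)$, which gives
\[
R_n(a+b,ab)=\sum_{k=0}^{n-1}\frac1n\binom{n}{k}\binom{n}{k+1}x^{2k}(ab)^{n-1-k}.
\]
Multiplying by $a$ and using $a\cdot(ab)^{n-1-k}=a^{n-k}b^{n-1-k}$ reproduces (\ref{CCab}) term by term, so $(\ref{CCab})=a\,R_n(a+b,ab)$.

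For the right-hand expression (\ref{CCab2}), I would apply Theorem~\ref{SMCorT} with the same substitution $(a,b)\mapsto(a+b,ab)$. Its left-hand side becomes $\sum_{k=0}^{n}\binom{n+k}{2k}C_kx^{2k}(ab-x^2)^{n-k}$, while its right-hand side becomes $ab\,R_n(a+b,ab)$; dividing by $b$ yields precisely $a\,R_n(a+b,ab)$, matching (\ref{CCab2}). This closes the chain of equalities.

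The argument is essentially bookkeeping once the common object $R_n(a+b,ab)$ is isolated, so I expect no serious obstacle; the one point needing care is tracking the substitution $(a,b)\mapsto(a+b,ab)$ consistently, so that the template $x^2-ax+b$ in Theorems~\ref{CMCorT}--\ref{SMCorT} factors as $(x-a)(x-b)$ and the constant term $b$ passes to $ab$ (and the term $b-x^2$ in Theorem~\ref{SMCorT} to $ab-x^2$). The apparent division by $b$ in (\ref{CCab2}) causes no trouble: the factor $b$ produced by Theorem~\ref{SMCorT} cancels it exactly, and since every expression is a polynomial in $a,b,x$, the resulting identity holds for all $a,b\in\bC$.
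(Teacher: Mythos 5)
Your proposal is correct and follows essentially the same route as the paper: apply (\ref{CM}) in the form $C_{k+1}^{(a,b)}=aM_k^{(a+b,ab)}$ together with the factorization $(x-a)(x-b)=x^2-(a+b)x+ab$, then invoke Theorems \ref{CMCorT} and \ref{SMCorT} under the substitution $(a,b)\mapsto(a+b,ab)$. The bookkeeping with the factors $a$ and $b$ is handled correctly, so nothing further is needed.
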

\begin{proof}
By (\ref{CM}) and (\ref{CMCor}), we have
\begin{align*}
&\sum_{k=0}^{n-1}\binom{n-1}{k}C_{k+1}^{(a,b)}x^{k}(x-a)^{n-1-k}(x-b)^{n-1-k}\\
 \xlongequal{(\ref{CM})}&a\sum_{k=0}^{n-1}\binom{n-1}{k}M_{k}^{(a+b,ab)}x^{k}\big(x^2-(a+b)x+ab\big)^{n-1-k}\\
\xlongequal{(\ref{CMCor})}&\sum_{k=0}^{n-1}\frac{1}{n}\binom{n}{k}\binom{n}{k+1}x^{2k}a^{n-k}b^{n-1-k}.
\end{align*}
Similarly, (\ref{CCab2}) can be obtained from  (\ref{CM}) and (\ref{SMCor}).
\end{proof}
In particular, substituting  $a=b=1$ in (\ref{CCab2}), we can get
\begin{equation}\label{CMCorCat}
\sum_{k=0}^{n}\binom{n+k}{2k}C_k x^{2k}(1-x^2)^{n-k}=\sum_{k=0}^{n-1}\binom{n-1}{k}C_{k+1}x^{k}(x-1)^{2n-2-2k}.
\end{equation}

By (\ref{SC}), which is $S_{n}^{(a,b)}=\frac{(a+b)}{b}C_{n}^{(b, a+b)}$, we may rewrite (\ref{CCab}) and (\ref{CCab2}) respectively as follows:
\begin{align}\label{SCab}
&\sum_{k=0}^{n-1}\binom{n-1}{k}S_{k+1}^{(a,b)}x^{k}(x-b)^{n-1-k}(x-a-b)^{n-1-k}\notag\\
=&\sum_{k=0}^{n-1}\frac{1}{n}\binom{n}{k}\binom{n}{k+1}x^{2k}(a+b)^{n-k}b^{n-1-k},
\end{align}
\begin{align}\label{SCab2}
&\sum_{k=0}^{n-1}\binom{n-1}{k}S_{k+1}^{(a,b)}x^{k}(x-b)^{n-1-k}(x-a-b)^{n-1-k}\notag\\
=&\frac1b\sum_{k=0}^{n}\binom{n+k}{2k}C_k x^{2k}(ab+b^2-x^2)^{n-k}.
\end{align}
Substituting $a=b=1$ into (\ref{SCab}) and (\ref{SCab2}), we have
\begin{align}
&\sum_{k=1}^{n-1}\binom{n-1}{k}S_{k+1}x^{k}(x-1)^{n-1-k}(x-2)^{n-1-k}\notag\\=&\sum_{k=0}^{n-1}\frac{1}{n}\binom{n}{k}\binom{n}{k+1}x^{2k} 2^{n-k}\label{CMCorSch}\\
=&\sum_{k=0}^{n}\binom{n+k}{2k}C_k x^{2k}(2-x^2)^{n-k}.
\end{align}
In particular, setting $x=\sqrt{2}$ in (\ref{CMCorSch}), we can get
\begin{equation}\label{SCsqrt2}
C_n=\frac{(-\sqrt{2})^{n-1}}{2^n}\sum_{k=1}^{n-1}(-1)^{k}\binom{n-1}{k}(\sqrt{2}-1)^{2(n-1-k)}S_{k+1}.
\end{equation}
Similarly, applying (\ref{CMCorCat}) with $x=\frac12\sqrt{2}$, we have
\begin{equation}\label{SCsqrt2b}
S_n=\sum_{k=0}^{n-1}2^{\frac k2+1}\binom{n-1}{k}(\sqrt{2}-1)^{2(n-1-k)}C_{k+1}.
\end{equation}
Of course, (\ref{SCsqrt2b}) also can be easily deduced from (\ref{SCsqrt2}) via a binomial transform.
\begin{proposition} For $\alpha,\beta\in\bC$,
\begin{align}
M_n^{(\alpha,\beta)}=&
\sum_{k=0}^{n+1}\bigg(\frac{\alpha+\sqrt{\alpha^2-4\beta}}2\bigg)^{n-2k}\cdot\frac{\beta^k}{n+1}\binom{n+1}{k}\binom{n+1}{k+1}\label{MCalphabeta}\\
=&\frac{(\alpha^2-4\beta)^{\frac{n+1}{2}}}{\beta}\sum_{k=0}^{n+1}\bigg(\frac{\alpha}{\sqrt{\alpha^2-4\beta}}-1\bigg)^{k+1}\cdot\frac{C_k}{2^{k+1}}\binom{n+k}{2k}.\label{MSalphabeta}
\end{align}
\end{proposition}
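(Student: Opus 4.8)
The plan is to obtain both expansions of $M_n^{(\alpha,\beta)}$ directly from Theorem~\ref{thm:CSM}, by reading (\ref{CM}) and (\ref{SM}) backwards so that a weighted Motzkin number is written through a weighted Catalan, respectively Schr\"oder, number, and then substituting the closed forms (\ref{Cnab}) and (\ref{Snab}). The only extra ingredient is that the substitutions $(a,b)\mapsto(a+b,ab)$ and $(a,b)\mapsto(a+2b,ab+b^2)$ appearing in Theorem~\ref{thm:CSM} are inverted by solving a single quadratic, which is exactly where the radical $\sqrt{\alpha^2-4\beta}$ enters.

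For (\ref{MCalphabeta}) I would rewrite (\ref{CM}) as $M_{m-1}^{(a+b,ab)}=a^{-1}C_m^{(a,b)}$ and set $m=n+1$. Taking $a$ and $b$ to be the two roots of $t^2-\alpha t+\beta=0$, say $a=\frac{\alpha+\sqrt{\alpha^2-4\beta}}2$ and $b=\frac{\alpha-\sqrt{\alpha^2-4\beta}}2$, forces $a+b=\alpha$ and $ab=\beta$, whence $M_n^{(\alpha,\beta)}=a^{-1}C_{n+1}^{(a,b)}$. Expanding the right-hand side by (\ref{Cnab}) and using $b=\beta/a$ to rewrite $a^{n+1-k}b^k/a$ as $\beta^k a^{n-2k}$ turns the general term into $\frac{\beta^k}{n+1}\binom{n+1}{k}\binom{n+1}{k+1}a^{n-2k}$, which is exactly (\ref{MCalphabeta}); the nominal $k=n+1$ term is zero since $\binom{n+1}{n+2}=0$.

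For (\ref{MSalphabeta}) I would rewrite (\ref{SM}) as $M_{m-1}^{(a+2b,ab+b^2)}=(a+b)^{-1}S_m^{(a,b)}$ and again set $m=n+1$. Solving $a+2b=\alpha$ and $b(a+b)=\beta$ by eliminating $a=\alpha-2b$ shows that $b$ is once more a root of $t^2-\alpha t+\beta=0$; the choice $b=\frac{\alpha-\sqrt{\alpha^2-4\beta}}2$ gives the clean values $a=\sqrt{\alpha^2-4\beta}$ and $a+b=\frac{\alpha+\sqrt{\alpha^2-4\beta}}2$. Expanding $M_n^{(\alpha,\beta)}=(a+b)^{-1}S_{n+1}^{(a,b)}$ with (\ref{Snab}) produces $(a+b)^{-1}\sum_{k}\binom{n+1+k}{2k}C_k\,a^{n+1-k}b^k$; factoring out $a^{n+1}$, writing $b/a=\frac12\big(\tfrac{\alpha}{\sqrt{\alpha^2-4\beta}}-1\big)$, and rationalizing $\frac1{a+b}=\frac{\alpha-\sqrt{\alpha^2-4\beta}}{2\beta}$ supplies one further factor of $\frac12\big(\tfrac{\alpha}{\sqrt{\alpha^2-4\beta}}-1\big)$. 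This merges with $(b/a)^k$ to raise the bracket to the $(k+1)$-st power and to produce the $\frac1\beta$ prefactor, yielding the form displayed in (\ref{MSalphabeta}).

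The main difficulty is entirely bookkeeping. The index shift to order $n+1$, the accumulated powers of $\sqrt{\alpha^2-4\beta}=a$, and the cancellations against $(a+b)^{-1}$ all interact, so it is very easy to be off by one in the exponent of $(\alpha^2-4\beta)$ or in the upper index of the binomial coefficient in the Schr\"oder sum. To nail down these constants unambiguously I would evaluate both sides at $n=0$ and $n=1$, where $M_0^{(\alpha,\beta)}=1$ and $M_1^{(\alpha,\beta)}=\alpha$, and read off the correct prefactor and binomial index from the comparison.
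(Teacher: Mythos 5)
Your approach is exactly the paper's: choose $a,b$ as the roots of $t^2-\alpha t+\beta=0$ so that $(a+b,ab)=(\alpha,\beta)$ and apply (\ref{CM}) with (\ref{Cnab}) for the first identity; choose $a=\sqrt{\alpha^2-4\beta}$, $b=\frac12(\alpha-\sqrt{\alpha^2-4\beta})$ so that $(a+2b,ab+b^2)=(\alpha,\beta)$ and apply (\ref{SM}) with (\ref{Snab}) for the second. The derivation of (\ref{MCalphabeta}) is correct and identical to the paper's.

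For the second identity there is a real discrepancy, but it is one your own proposed sanity check would catch, and it lies in the printed statement rather than in your method. Your expansion $S_{n+1}^{(a,b)}=\sum_{k}\binom{n+1+k}{2k}C_k\,a^{n+1-k}b^k$ is the correct instance of (\ref{Snab}) (the paper's intermediate display writes $\binom{n+k}{2k}C_k\,a^{n-k}b^k$ for $S_{n+1}^{(a,b)}$, which is inconsistent with (\ref{Snab})). Carrying your bookkeeping through, $\frac1{a+b}=\frac{\alpha-\sqrt{\alpha^2-4\beta}}{2\beta}=\frac{\sqrt{\alpha^2-4\beta}}{\beta}\cdot\frac12\bigl(\frac{\alpha}{\sqrt{\alpha^2-4\beta}}-1\bigr)$ contributes an extra factor $\sqrt{\alpha^2-4\beta}$ beyond what you recorded, so the prefactor is $\frac{(\alpha^2-4\beta)^{\frac{n+2}{2}}}{\beta}$, and the binomial coefficient is $\binom{n+1+k}{2k}$; the displayed (\ref{MSalphabeta}) is off by one in both places. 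Your suggested evaluation at $n=0$ confirms this: at $\alpha=3$, $\beta=2$ the printed right-hand side gives $\tfrac12$ while $M_0^{(3,2)}=1$, whereas the corrected version gives $1$; the paper's own specialization at $\alpha=\beta=1$ also uses $\binom{n+1+k}{2k}$, corroborating the correction. So do not claim the computation "yields the form displayed" — state and prove the corrected formula instead.
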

\begin{proof}
Let $a=\frac12(\alpha+\sqrt{\alpha^2-4\beta})$ and $b=\frac12(\alpha-\sqrt{\alpha^2-4\beta})$.
Evidently $a+b=\alpha$ and $ab=\beta$. So by (\ref{CM})
\begin{align*}
M_n^{(\alpha,\beta)}=&\frac{C_{n+1}^{(a,b)}}{a}=a^{n}\sum_{k=0}^{n+1}\frac1{n+1}\binom{n+1}{k}\binom{n+1}{k+1}\cdot\bigg(\frac ba\bigg)^k
\notag\\
=&\sum_{k=0}^{n+1}\bigg(\frac{\alpha+\sqrt{\alpha^2-4\beta}}2\bigg)^{n-2k}\cdot\frac{\beta^k}{n+1}\binom{n+1}{k}\binom{n+1}{k+1}.
\end{align*}
Similarly, letting  $a=\sqrt{\alpha^2-4\beta}$ and $b=\frac12(\alpha-\sqrt{\alpha^2-4\beta})$, we have
$a+2b=\alpha$ and $ab+b^2=\beta$. Then
\begin{align*}
M_n^{(\alpha,\beta)}=&\frac{S_{n+1}^{(a,b)}}{a+b}=\frac{1}{a+b}\sum_{k=0}^{n+1}\binom{n+k}{2k}C_k\cdot a^{n-k}b^k
\notag\\
=&\frac{(\alpha^2-4\beta)^{\frac{n+1}{2}}}{\beta}\sum_{k=0}^{n+1}\bigg(\frac{\alpha}{\sqrt{\alpha^2-4\beta}}-1\bigg)^{k+1}\cdot\frac{C_k}{2^{k+1}}\binom{n+k}{2k}.
\end{align*}
\end{proof}
In particular, substituting $\alpha=\beta=1$ in (\ref{MCalphabeta}) and (\ref{MSalphabeta}), we obtain
\begin{align}
M_n=&
\sum_{k=0}^{n+1}\bigg(\frac{1}{2}+\frac{\sqrt{3}}{2}\fii\bigg)^{n-2k}\cdot\frac{1}{n+1}\binom{n+1}{k}\binom{n+1}{k+1}\\
=&\bigg(\frac{1}{2}-\frac{\sqrt{3}}{2}\fii\bigg)^{n+2}
\sum_{k=0}^{n+1}\bigg(-\frac{3}{2}+\frac{\sqrt{3}}{2}\fii\bigg)^{n+1-k}\binom{n+1+k}{2k} C_{k},
\end{align}
where $\fii=\sqrt{-1}$.

Finally, let us see some applications of (\ref{Mna12}).
\begin{proposition}\label{MCSnk}
\begin{align}
M_{n}=&\sum_{k=0}^n(-1)^{n-k}\binom{n}{k}C_{k+1}\label{MCnk1}\\
=&\sum_{k=0}^n(-1)^k3^{n-k}\binom{n}{k}C_{k+1}\label{MCnk2}\\
=&\frac1{(\sqrt{2})^{n+2}}\sum_{k=0}^n(\sqrt{2}-3)^{n-k}\binom{n}{k}S_{k+1}\label{MSnk1}\\
=&\frac1{(\sqrt{2})^{n+2}}\sum_{k=0}^n(-1)^k(3+\sqrt{2})^{n-k}\binom{n}{k}S_{k+1}.\label{MSnk2}
\end{align}
\end{proposition}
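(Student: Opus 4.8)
The plan is to derive all four identities directly from Lemma~\ref{Mna12L}, equation~(\ref{Mna12}), using only the two elementary scaling relations recorded in the preceding remark, namely $M_n^{(ax,bx^2)}=x^nM_n^{(a,b)}$ and $M_n^{(a,b)}=(-1)^nM_n^{(-a,b)}$, together with the specializations $M_n=M_n^{(1,1)}$, $C_{n+1}=M_n^{(2,1)}$ and $S_{n+1}=2M_n^{(3,2)}$. The idea in each case is to write the left-hand side $M_n=M_n^{(1,1)}$ as $M_n^{(a_1+a_2,b)}$ for a suitable split $1=a_1+a_2$ with $b=1$, so that the right-hand side of (\ref{Mna12}) produces exactly the desired weighted number $M_k^{(a_1,b)}$ in the summand.

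For the two Catalan identities the weight $b=1$ already matches, so no rescaling is needed. To obtain (\ref{MCnk1}) I would take $a_1=2$, $a_2=-1$ and $b=1$ in (\ref{Mna12}); since $M_k^{(2,1)}=C_{k+1}$ and $a_2^{n-k}=(-1)^{n-k}$, the right-hand side is immediately $\sum_{k}\binom{n}{k}(-1)^{n-k}C_{k+1}$. For (\ref{MCnk2}) I would instead take $a_1=-2$, $a_2=3$, so that $a_1+a_2=1$ again, and then rewrite $M_k^{(-2,1)}=(-1)^kM_k^{(2,1)}=(-1)^kC_{k+1}$ using the sign relation; this turns the summand into $(-1)^k3^{n-k}\binom{n}{k}C_{k+1}$.

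The two Schr\"oder identities require one preliminary rescaling, because $S_{k+1}=2M_k^{(3,2)}$ lives at $b=2$ rather than $b=1$. I would first apply $M_k^{(ax,bx^2)}=x^kM_k^{(a,b)}$ with $x=1/\sqrt2$ to pass from $(3,2)$ to $(3/\sqrt2,1)$, giving $M_k^{(3/\sqrt2,1)}=2^{-k/2}M_k^{(3,2)}=2^{-(k/2)-1}S_{k+1}$. Then (\ref{Mna12}) with $a_1=3/\sqrt2$, $a_2=1-3/\sqrt2=(\sqrt2-3)/\sqrt2$ and $b=1$ yields (\ref{MSnk1}) after collecting the powers of $2$; identity (\ref{MSnk2}) is the same computation starting from $M_k^{(-3/\sqrt2,1)}=(-1)^k2^{-(k/2)-1}S_{k+1}$ with $a_1=-3/\sqrt2$ and $a_2=1+3/\sqrt2$.

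The only place where any care is needed---and it is purely bookkeeping rather than a genuine obstacle---is the accounting of the powers of $\sqrt2$ at the end of the Schr\"oder computations: one must check that the factor $2^{-(k/2)-1}$ coming from the rescaled Schr\"oder number combines with $a_2^{n-k}=(\sqrt2\mp3)^{n-k}/(\sqrt2)^{n-k}$ (up to sign) so that the $k$-dependence in the exponent of $2$ cancels, leaving the uniform prefactor $2^{-(n/2)-1}=1/(\sqrt2)^{n+2}$ displayed in (\ref{MSnk1}) and (\ref{MSnk2}). There is no combinatorial or analytic difficulty beyond this simplification, and the whole proposition reduces to four instances of the single binomial-transform identity (\ref{Mna12}).
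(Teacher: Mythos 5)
Your proposal is correct and follows essentially the same route as the paper: every identity is obtained from Lemma~\ref{Mna12L} together with the scaling relations $M_n^{(ax,bx^2)}=x^nM_n^{(a,b)}$ and $M_n^{(a,b)}=(-1)^nM_n^{(-a,b)}$ and the specializations $C_{k+1}=M_k^{(2,1)}$, $S_{k+1}=2M_k^{(3,2)}$. The only cosmetic differences are that for (\ref{MCnk2}) the paper writes $M_n=(-1)^nM_n^{(2-3,1)}$ rather than splitting $1=-2+3$ directly, and for (\ref{MSnk1})--(\ref{MSnk2}) the paper rescales $M_n^{(1,1)}$ up to $M_n^{(\pm\sqrt{2},2)}$ instead of rescaling $M_k^{(3,2)}$ down to $b=1$; these are equivalent bookkeeping choices.
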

\begin{proof}
By (\ref{Mna12}), we have
$$
M_n^{(1,1)}=\sum_{k=0}^n(-1)^{n-k}\binom{n}{k}M_k^{(2,1)}
=\sum_{k=0}^n(-1)^{n-k}\binom{n}{k}C_{k+1}.
$$
This concludes (\ref{MCnk1}). Similarly, (\ref{MCnk2}) follows from that
\begin{align*}
M_{n}&=(-1)^nM_n^{(-1,1)}=(-1)^nM_n^{(2-3,1)}\\
&=(-1)^n\sum_{k=0}^n\binom{n}{k}M_k^{(2,1)}(-3)^{n-k}
=\sum_{k=0}^n(-1)^k3^{n-k}\binom{n}{k}C_{k+1}.
\end{align*}
On the other hand, since $M_k^{(3,2)}=\frac12S_{k+1}$,
\begin{align*}
M_{n}&=\frac1{(\sqrt{2})^{n}}M_n^{(\sqrt{2},2)}=\frac1{(\sqrt{2})^{n}}M_n^{(3+\sqrt{2}-3,2)}\\
     &=\frac1{(\sqrt{2})^{n}}\sum_{k=0}^n\binom{n}{k}M_k^{(3,2)}(\sqrt{2}-3)^{n-k}\\
     &=\frac1{(\sqrt{2})^{n+2}}\sum_{k=0}^n(\sqrt{2}-3)^{n-k}\binom{n}{k}S_{k+1}.
\end{align*}
Also, we have
\begin{align*}
M_{n}&=\frac{M_n^{(-\sqrt{2},2)}}{(-\sqrt{2})^{n}}=\frac{M_n^{(3-\sqrt{2}-3,2)}}{(-\sqrt{2})^{n}}\\
     &=\frac1{(\sqrt{2})^{n+2}}\sum_{k=0}^n(-1)^k\binom{n}{k}(\sqrt{2}+3)^{n-k}S_{k+1}.
\end{align*}
Therefore~(\ref{MSnk1}) and~(\ref{MSnk2}) hold.
\end{proof}
Using the fact $S_n=2M_{n-1}^{(3,2)}$ again, together with the identities $M_n^{(\sqrt{2}, 2)}=2^{\frac n2}M_n^{(1,1)}$ and $M_n^{(-\sqrt{2}, 2)}=(-1)^{n}M_n^{(\sqrt{2}, 2)}$,  we can also get
\begin{align}
S_{n}&=2M_{n-1}^{(3,2)}=2M_{n-1}^{(3-\sqrt{2}+\sqrt{2},2)}\nonumber\\
     &=\sum_{k=0}^{n-1}2^{\frac k2+1}(3-\sqrt{2})^{n-1-k}\binom{n-1}{k}M_{k}\label{SMnk1}
\end{align}
\begin{align}
S_{n}&=2M_{n-1}^{(3,2)}=2M_{n-1}^{(3+\sqrt{2}-\sqrt{2},2)}\nonumber\\
     &=\sum_{k=0}^{n-1}(-1)^k2^{\frac k2+1}(3+\sqrt{2})^{n-1-k}\binom{n-1}{k}M_{k}.\label{SMnk2}
\end{align}
\begin{remark} There are two arithmetic consequences of Theorem \ref{MCSnk}. In view of (\ref{MCnk2}), we have
\begin{equation}
M_n\equiv(-1)^n\sum_{k=0}^{m-1}(-3)^k\binom{n}{k}C_{n-k+1}\pmod{3^m}
\end{equation}
for each $m\geq 1$. In particular,
\begin{equation}
M_n\equiv(-1)^nC_{n+1}\pmod{3}.
\end{equation}
Moreover, since $7=(3+\sqrt{2})(3-\sqrt{2})$, by (\ref{MSnk1}), we can get
\begin{equation}
M_n\equiv2^{n+2}S_{n+1}\pmod{7}.
\end{equation}
\end{remark}

\section{A bijection on the set of Catalan paths}
\setcounter{equation}{0}\setcounter{theorem}{0}

Assume that $P=p_1p_2\ldots p_{2n}$ is a Catalan path of order $n$ and $a,b\in\bC$.
For a Catalan path $P=p_1p_2\ldots p_{2n}$, a step $p_i$ is said to be in an even (resp. odd) position if $i$ is even (resp. odd).
We shall assign the steps of $P$ in even positions with some weights. For $1\leq i\leq n$, assign the step $p_{2i}$ with weight $a$ or $b$ according to whether $p_{2i}=\E$ or $\N$. The weighted Catalan path obtained in this way is called the even-north type $(a,b)$-Catalan path.
Let $\CAT_n(a,b)$ denote the set of all  even-north type $(a,b)$-Catalan paths of order $n$ and $\cC_n^{(a,b)}$ the weight of $\CAT_n(a,b)$. Let $\enor(P)$ denote the number of $\N$ steps in even positions on a Catalan path $P$. It's been proved that the statistic $\enor$ is distributed by the Narayana numbers~\cite{CYY08, DV84} and hence we have
\begin{align*}
\cC_n^{(a,b)}:=&\sum_{P\in \CAT_n(a,b)}\wt(P)\\
=&\sum_{P\in \CAT_n(a,b)}a^{n-\enor(P)}b^{\enor(P)}\\
=&\sum_{k=0}^{n-1}\frac1n\binom{n}{k}\binom{n}{k+1}a^{n-k}b^k.
\end{align*}
We can easily see from the algebraic expression that the following theorem holds.
\begin{theorem}\label{thmCC}
\begin{equation}\label{CabCab}
\cC_n^{(a,b)}=C_n^{(a,b)}.
\end{equation}
\end{theorem}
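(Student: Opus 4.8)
The plan is to give two proofs, a short one and a structural one. First I would record the immediate algebraic proof: the displayed evaluation of $\cC_n^{(a,b)}$ just above the theorem and the evaluation \eqref{Cnab} of $C_n^{(a,b)}$ are \emph{literally the same} polynomial $\sum_{k=0}^{n-1}\frac1n\binom nk\binom n{k+1}a^{n-k}b^k$, the former because $\enor$ is Narayana-distributed \cite{CYY08, DV84} and the latter because $\valley$ is Narayana-distributed. Hence \eqref{CabCab} holds coefficientwise in $a$ and $b$ with no further argument, which is exactly what the sentence preceding the theorem asserts.

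Second, and this is the combinatorially meaningful statement promised by the section title and the abstract, I would upgrade this to an explicit, weight-preserving bijection. Concretely, the goal is a map $\phi$ on the set $\Cat(n)$ of Catalan paths of order $n$ satisfying $\enor(\phi(P)) = \valley(P)$ for every $P$. Such a $\phi$ carries $\Cat_n(a,b)$ onto $\CAT_n(a,b)$ while preserving weights, since a valley-type path of weight $a^{n-\valley(P)}b^{\valley(P)}$ is sent to an even-north path of weight $a^{n-\enor(\phi(P))}b^{\enor(\phi(P))}$, and therefore reproves \eqref{CabCab} bijectively (and simultaneously reproves both Narayana facts used above).

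To build $\phi$ I would use the first-return decomposition $P = \N\,P_1\,\E\,P_2$, with $P_1 \in \Cat(j)$ and $P_2 \in \Cat(n-1-j)$, and track both statistics through it. A short position count gives
\[
\valley(P) = \valley(P_1) + \valley(P_2) + [P_2 \neq \emptyset], \qquad \enor(P) = \big(j - \enor(P_1)\big) + \enor(P_2),
\]
the term $j - \enor(P_1)$ arising because the leading step $\N$ shifts all of $P_1$ by one position, so that the even positions of $P$ lying inside $P_1$ are exactly the \emph{odd} positions of $P_1$. I would then define $\phi$ recursively, applying a complementation of the $\N$ steps in odd positions of the $P_1$-factor to convert the count $j - \enor(P_1)$ into the valley contribution of $P_1$, passing $P_2$ through unchanged, and using the return $\E$ (which sits in an even position and hence carries even-north weight $a$) to absorb the $[P_2\neq\emptyset]$ correction.

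The main obstacle is exactly this parity flip: because even positions inside $P_1$ become odd positions, the natural recursion for $\enor$ mixes $P_1$ with its $a\leftrightarrow b$ swapped weight, whereas the $\valley$ recursion does not, so the two recursions are \emph{not} termwise equal (indeed the Narayana polynomial is not symmetric in $a$ and $b$, as $a^2+ab$ already shows at $n=2$). The real work is therefore to pin down a complementation on $P_1$ that reconciles the two recursions and, crucially, to check that the output still stays weakly above the diagonal and that $\phi$ is a genuine bijection realizing $\valley \mapsto \enor$. I would confirm the construction on small orders ($n \le 3$, where the valley and even-north distributions are both $1,3,1$) and then finish by induction on $n$.
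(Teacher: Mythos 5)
Your first paragraph is a complete and correct proof of the stated identity, and it is in substance exactly the paper's proof of Theorem~\ref{thmCC}: both sides are the Narayana polynomial $\sum_{k}\frac1n\binom nk\binom n{k+1}a^{n-k}b^k$, the left side because $\enor$ is Narayana-distributed and the right side by \eqref{Cnab}. So as a proof of \eqref{CabCab} the proposal succeeds, and nothing more is needed.

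The bijective upgrade, however --- which is also where the paper invests its effort, in Lemma~\ref{Cat11} --- does not work as you describe it. Your plan is to send $P=\N P_1\E P_2$ to a path $\N Q_1\E Q_2$ with $Q_1,Q_2$ of the same orders $j$ and $n-1-j$, passing $P_2$ through unchanged and only complementing $P_1$. Combining your two (correct) recursions, this forces $\enor(Q_1)=j-\valley(P_1)-[P_2\neq\emptyset]$; when $P_2=\emptyset$ and $\valley(P_1)=0$ this demands $\enor(Q_1)=j$, which is impossible since $\enor$ on order-$j$ paths takes values in $\{0,\dots,j-1\}$. Concretely, for $P=\N\N\E\E$ (so $P_1=\N\E$, $P_2=\emptyset$, $\valley(P)=0$) the unique order-$2$ path with $\enor=0$ is $\N\E\N\E$, whose first-return factors have orders $(0,1)$, not $(1,0)$: any correct $\phi$ must scramble the first-return decomposition, so ``using the return $\E$ to absorb the correction'' and ``passing $P_2$ through unchanged'' cannot both be retained. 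The paper avoids the recursion entirely: its $\phi$ first replaces every valley $\E\N$ of $P$ by a peak $\N\E$ in place, and then globally interleaves the resulting $\N$-subsequence with the resulting $\E$-subsequence as $\hat p_{s_1}\hat p_{t_1}\cdots\hat p_{s_n}\hat p_{t_n}$; one then checks directly that the output stays above the diagonal and that the map inverts by pairing each even-position $\N$ with an odd-position $\E$. If you want a bijective proof you should either adopt that direct construction or substantially rework the recursive one; as written, the recursive sketch has a genuine gap (fortunately one that does not affect the validity of your proof of the theorem itself).
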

However, we are more interested in finding a purely combinatorial proof. That is, we  shall construct a bijection between two different types of $(a,b)$-Catalan paths since we believe it will be also meaningful on its own sense. To give a bijective proof of (\ref{CabCab}),  we first show the following lemma.
\begin{lemma}\label{Cat11}
For any $0\leq k\leq n-1$, there is a bijection between the set
$$
A_k:=\{P\in\Cat_n(a,b):\, \valley(P)=k\}
$$
and the set
$$
B_k:=\{P\in\CAT_n(a,b):\, \enor(P)=k\}.
$$
\end{lemma}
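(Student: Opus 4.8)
The plan is to prove the statement through a single \emph{explicit} statistic-matching bijection on ordinary Catalan paths, to which the whole lemma reduces. Since every $P\in\Cat_n(a,b)$ carries weight $a^{n-\valley(P)}b^{\valley(P)}$ while the \emph{same} path, viewed in $\CAT_n(a,b)$, carries weight $a^{n-\enor(P)}b^{\enor(P)}$, and since $\Cat_n(a,b)$ and $\CAT_n(a,b)$ share the same underlying set $\mathcal{D}_n$ of unweighted Catalan paths of order $n$, it suffices to construct a bijection $\phi\colon\mathcal{D}_n\to\mathcal{D}_n$ with $\valley(P)=\enor(\phi(P))$ for every $P$. Then $\phi$ sends the fiber $\{\valley=k\}$ into $\{\enor=k\}$ injectively, and for any $Q$ with $\enor(Q)=k$ the preimage $P=\phi^{-1}(Q)$ satisfies $\valley(P)=\enor(Q)=k$, so the restriction is onto; this gives the bijection $A_k\to B_k$ directly. (Consistency with the counts is guaranteed since both $\valley$ and $\enor$ range over $\{0,\dots,n-1\}$ and are Narayana-distributed, each fiber having size $N(n,k)$.)

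The key device for handling $\enor$ is a parity identity. After $j$ steps a Catalan path sits at height $2(\#\N)-j$, which has the same parity as $j$; hence a step in an even position is exactly a step ending at even height, and
$$\enor(P)=\#\{\,\N\text{ steps of }P\text{ terminating at an even height}\,\}.$$
This converts the positional statistic $\enor$ into a height statistic that interacts with the recursive structure of Catalan paths, whereas $\valley$ is controlled by the ascent structure, with $\valley(P)+1$ equal to the number of maximal $\N$-runs.

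I would then build $\phi$ through the first-return decomposition $P=\N\,Q\,\E\,R$ with $Q,R\in\mathcal{D}$. For valleys this is clean: $\valley(P)=\valley(Q)+\valley(R)+[R\neq\varnothing]$, the extra valley being the returning $\E$ followed by the first $\N$ of $R$. For $\enor$ it is not, because the arch $\N Q\,\E$ lifts $Q$ by one unit, so an $\N$ of $Q$ ending at even height in $P$ ends at \emph{odd} height inside $Q$; the arch thus contributes the \emph{complementary} count on $Q$, while $R$ (shifted by an even number of steps) contributes $\enor(R)$ unchanged. To absorb this flip I would run a pair of mutually recursive maps — one matching $\valley$ to "$\N$ at even height", a companion matching $\valley$ to "$\N$ at odd height" — glued along the decomposition so that the complementary count emitted by each arch is consumed by the companion. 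An alternative, closer to the paper's philosophy, is to fold $P$ two steps at a time into a Motzkin-type path (so that $\enor$ becomes the number of up-steps plus one distinguished colour of level-steps) and transport $\valley$ across this folding.

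The main obstacle is exactly the clash between the positional nature of $\enor$ and the adjacency nature of $\valley$: the height-parity flip inside each arch keeps any single recursion from closing, and in the folded model the valleys straddling two consecutive pairs generate a cross-boundary term that must be rewritten as internal data. The delicate task is therefore to design the relocation/gluing so that it is simultaneously an honest bijection with an explicit inverse and \emph{exactly} statistic-matching, with each valley of $P$ accounting for precisely one even-position $\N$ of $\phi(P)$ and no miscount at arch boundaries. Checking both properties on every fiber $\{\valley=k\}$, including the extreme cases $k=0$ (the staircase $\N^n\E^n$, which must land on an $\enor=0$ path) and $k=n-1$ (the zigzag $(\N\E)^n$, which must land on the maximal $\enor=n-1$ path), is where the substantive work lies.
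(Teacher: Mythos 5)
Your reduction is sound: since every path in $A_k$ has weight $a^{n-k}b^k$ and every path in $B_k$ has the same weight, it suffices to produce a bijection of unweighted Catalan paths carrying $\valley$ to $\enor$, and your parity observation (an even-position step ends at even height above the diagonal) is correct. But the proof stops exactly where the content begins: the map $\phi$ is never constructed. You propose a first-return recursion $P=\N Q\E R$ and correctly note that the arch flips position-parity, so that $\enor(P)=\bigl(|Q|-\enor(Q)\bigr)+\enor(R)$ while $\valley(P)=\valley(Q)+\valley(R)+[R\neq\varnothing]$; the ``pair of mutually recursive maps'' you invoke to absorb this flip is not defined, and as sketched it does not close --- the companion statistic $|Q|-\enor(Q)$ ranges over $\{1,\dots,|Q|\}$ rather than $\{0,\dots,|Q|-1\}$, so matching it against $\valley(Q)$ produces an off-by-one discrepancy (visible already when $R=\varnothing$) that you acknowledge but do not resolve. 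Declaring that ``the substantive work lies'' in designing the gluing is an admission that the bijection, its inverse, and the statistic equality are all still to be established, so the lemma is not proved.

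For contrast, the paper's construction is global rather than recursive and avoids the parity issue entirely. Writing the $\N$ steps of $P$ as $p_{s_1},\dots,p_{s_n}$ and the $\E$ steps as $p_{t_1},\dots,p_{t_n}$, one first replaces every valley $p_{t_i}p_{s_j}$ (where $s_j=t_i+1$) by a peak to obtain $\hat P$, and then sets $\phi(P)=\hat p_{s_1}\hat p_{t_1}\hat p_{s_2}\hat p_{t_2}\cdots \hat p_{s_n}\hat p_{t_n}$, i.e.\ interleaves the letters according to the \emph{original} $\N$-slots and $\E$-slots. Each valley of $P$ then contributes exactly one $\N$ in an even position of $\phi(P)$, the path property \eqref{PhiCat} follows from $t_i<s_j$ for the valley pairs, and surjectivity is proved by explicitly reconstructing $P$ from the valley points $(t_i,s_i-1)$ read off the even-position $\N$'s and odd-position $\E$'s of $Q$. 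If you want to salvage your recursive approach you must actually exhibit both mutually recursive maps, verify the boundary case $R=\varnothing$, and supply inverses; as it stands the argument has a genuine gap.
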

\begin{proof}

Define $\phi: A_k\rightarrow B_k$ as follows:

Step 1. Notice that each path $P\in\Cat(n)$ starting from $(0,0)$ and ending at $(n,n)$ always contains $n$ $\N$ and $\E$ steps respectively.
We can assume that $p_{s_1},p_{s_2},\ldots,p_{s_n}$ are the successive $\N$ steps of $P$ from bottom to top,
and $p_{t_1},p_{t_2},\ldots,p_{t_n}$ the successive $\E$ steps of $P$ successively from left to right.

For example, Figure~\ref{a Catalan path of order $6$ with $3$ valleys.} gives a Catalan path $P=\N\E\N\N\E\N\E\E\N\N\E\E$ of order $6$, whose $\N$ steps are $p_1,p_3,p_4,p_6,p_9,p_{10}$ and $\E$ steps are $p_2,p_5,p_7,p_8,p_{11},p_{12}$. So we have
\begin{align*}
P&=p_1p_2p_3p_4\ldots p_{11}p_{12}\\
 &=\N\E\N\N\E\N\E\E\N\N\E\E \\
 &=p_{s_1}p_{t_1}p_{s_2}p_{s_3}p_{t_2}p_{s_4}p_{t_3}p_{t_4}p_{s_5}p_{s_6}p_{t_5}p_{t_6}.
\end{align*}
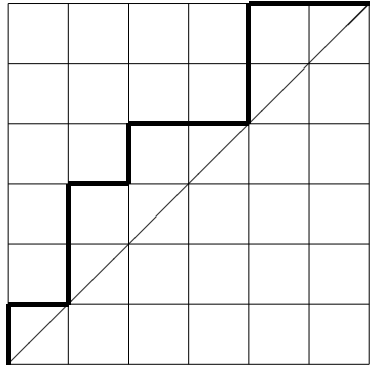
\begin{figure}[h]
\setlength{\unitlength}{1.0cm}
\begin{center}
\scalebox{0.8}{\begin{picture}(6,6)
\linethickness{0.075mm}
\multiput(0,0)(1,0){7}%
{\line(0,1){6}}
\multiput(0,0)(0,1){7}%
{\line(1,0){6}}
\put(0,0){\line(1,1){6}}
\linethickness{0.7mm}
\put(0,0){\line(0,1){1}}
\put(0,1){\line(1,0){1}}
\put(1,1){\line(0,1){2}}
\put(1,3){\line(1,0){1}}
\put(2,3){\line(0,1){1}}
\put(2,4){\line(1,0){2}}
\put(4,4){\line(0,1){2}}
\put(4,6){\line(1,0){2}}
\end{picture}}
\end{center}
\caption{a Catalan path of order $6$ with $3$ valleys.}
\label{a Catalan path of order $6$ with $3$ valleys.}
\end{figure}

Step 2. Clearly each valley in $P$ must be formed by some $p_{t_i}$ and $p_{s_j}$ with $s_j=t_i+1$.
Change $p_{t_i}$ by an $\N$ step and change $p_{s_j}$ by an $\E$ step, i.e.,
replace the valley $p_{t_i}p_{s_j}$ by a peak.
Do such a transformation for all valleys in $P$, and denote the resultant path by $\hat{P}=\hat{p}_1\hat{p}_2\cdots\hat{p}_{2n}$.

For example, for the Catalan path $P$ in Figure~\ref{a Catalan path of order $6$ with $3$ valleys.}, there are three valleys $p_2p_3$, $p_5p_6$, $p_8p_9$ in $P$.
So $$\hat{P}=\hat{p}_{1}\hat{p}_{2}\hat{p}_{3}\hat{p}_{4}\cdots
\hat{p}_{11}\hat{p}_{12}=\N\underline{\N\E}\N\underline{\N\E}\E\underline{\N\E}\N\E\E.$$

Step 3. Let $$\phi(P)=\hat{p}_{s_1}\hat{p}_{t_1}\hat{p}_{s_2}\hat{p}_{t_2}\cdots
\hat{p}_{s_n}\hat{p}_{t_n}.$$
That is, $\phi(P)$ is obtained by alternately placing $\hat{p}_{s_1},\ldots,\hat{p}_{s_n}$ and
$\hat{p}_{t_1},\ldots,\hat{p}_{t_n}$.

For example in Figure~\ref{a Catalan path of order $6$ with $3$ valleys.}, we have
$$
\phi(P)=\hat{p}_{s_1}\hat{p}_{t_1}\hat{p}_{s_2}\hat{p}_{t_2}\cdots
\hat{p}_{s_6}\hat{p}_{t_6}=\N \N\E\N\N\E\E\N\E\E\N\E.
$$
Clearly $\phi(P)$ has three $\N$ steps in even positions, which are the $2^{nd}$, $4^{th}$ and $8^{th}$ steps respectively as shown in Figure~\ref{a Catalan path of order $6$ with $3$ EN-steps.}.

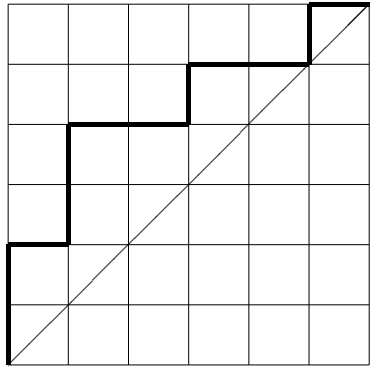
\begin{figure}[h]
\setlength{\unitlength}{1.0cm}
\begin{center}
\scalebox{0.8}{\begin{picture}(6,6)
\linethickness{0.075mm}
\multiput(0,0)(1,0){7}%
{\line(0,1){6}}
\multiput(0,0)(0,1){7}%
{\line(1,0){6}}
\put(0,0){\line(1,1){6}}
\linethickness{0.7mm}
\put(0,0){\line(0,1){2}}
\put(0,2){\line(1,0){1}}
\put(1,2){\line(0,1){2}}
\put(1,4){\line(1,0){2}}
\put(3,4){\line(0,1){1}}
\put(3,5){\line(1,0){2}}
\put(5,5){\line(0,1){1}}
\put(5,6){\line(1,0){1}}
\end{picture}}
\end{center}
\caption{a Catalan path of order $6$ with $3$ EN-steps.}
\label{a Catalan path of order $6$ with $3$ EN-steps.}
\end{figure}

We may write $\phi(P)=q_1q_2\cdots q_{2n}$.
It is not difficult to see that $\wt(\phi(P))=\wt(P)$ and
$\enor(\phi(P))=\valley(P)$. However,
we still need to show that $\phi(P)$ is a Catalan path. That is equivalent to prove that for any $1\leq h\leq 2n$, the following inequality holds:
\begin{equation}\label{PhiCat}
|\{1\leq i\leq h:\,q_i=\N\}|\geq
|\{1\leq j\leq h:\,q_j=\E\}|.
\end{equation}
Clearly we always have $q_{2i-1}=\N$ and $q_{2i}=\E$, unless $q_{2i-1}=\E$ or $q_{2i}=\N$.
Since $q_{2j-1}=\hat{p}_{s_j}$, if $q_{2j-1}=\E$, then $p_{s_{j}-1}p_{s_j}$ must be a valley in $P$, i.e., $t_i=s_j-1$ for some $1\leq i\leq n$. Since $P$ is a Catalan path, we must have $j\geq i+1$. Thus for any $1\leq j\leq n$ with $q_{2j-1}=\E$, there exists a unique $i$ such that $1\leq i\leq j-1$ and $q_{2i}=\hat{p}_{t_i}=\N$. Hence the number of $\N$ steps is always not less than the number of $\E$ steps in $\phi(P)$. Therefore (\ref{PhiCat}) is valid.

Conversely, in order to show that $\phi$ is a bijection, we need to prove that for each $Q\in B_k$, there exists a unique $P\in A_k$ such that
$Q=\phi(P)$. To do that, we orderly pair each evenly positioned $\N$ step and each oddly positioned $\E$ step of $Q$ together to get all valley points in $P$. A valley point is the vertex common to both steps of the valley. Explicitly, assume that $Q=q_1q_2\ldots q_{2n}\in B_k$ where $q_{2s_1-1},\ldots,q_{2s_k-1}=\E$ and $q_{2t_1},\ldots,q_{2t_k}=\N$. Then the valley points are formed by $(t_1, s_1-1), (t_2, s_2-1),\ldots, (t_k, s_k-1)$.
Since $Q$ is a Catalan path, we must have $t_i<s_i$ for each $1\leq i\leq k$.
We construct $P=p_1p_2\cdots p_{2n}$ as follows:

First, set $p_1=\cdots=p_{s_1-1}=\N$,
$p_{s_1}=\cdots=p_{s_1+t_1-1}=\E$ and $p_{s_1+t_1}=\N$. The first valley is formed by steps $p_{s_1+t_1-1}p_{s_1+t_1}$.
Next, set $p_{s_1+t_1+1},\ldots, p_{s_2+t_1-1}=\N$,
$p_{s_2+t_1},\ldots,p_{s_2+t_2-1}=\E$ and $p_{s_2+t_2}=\N$. The second valley is formed by steps $p_{s_2+t_2-1}p_{s_2+t_2}$.
Keep this process, until we get the last valley formed by $p_{t_k+s_k-1}p_{t_k+s_k}$.
Finally, set $p_{s_k+t_k+1},\ldots,p_{n+t_k}=\N$ and
$p_{n+t_k+1},\ldots,p_{2n}=\E$.

Note that for each $1\leq i\leq k$, we have
$$
|\{1\leq j<s_i+t_i:\,p_j=\N\}|-
|\{1\leq j<s_i+t_i:\,p_j=\E\}|=(s_i-1)-t_i\geq 0.
$$
So $P$ is also a Catalan path. It is not difficult to check that $\phi(P)=Q$. Hence $\phi$ is surely a bijection.

For example in Figure~\ref{a Catalan path of order $6$ with $3$ EN-steps.}, since $q_3=q_7=q_9=\E$ and $q_2=q_4=q_8=\N$, the valley points are $(1,1), (2,3), (4,4)$. Therefore, we have $p_1=\N$, $p_2=\E$, $p_3=p_4=\N$, $p_5=p_6=\N$, $p_7=p_8=\E$, $p_9=p_{10}=\N$, $p_{11}=p_{12}=\E$, which form exactly the path $P$ in Figure~\ref{a Catalan path of order $6$ with $3$ valleys.}.
\end{proof}
Notice that from the proof of Lemma~\ref{Cat11}, we have $\wt(\phi(P))=\wt(P)$ for each $P\in A_k$. So (\ref{CabCab}) immediately follows from Lemma \ref{Cat11}.

\section{Proofs of (\ref{CM}) and (\ref{SC})}
\setcounter{equation}{0}\setcounter{theorem}{0}

\begin{proof}[Proof of (\ref{CM})]

By Theorem~\ref{thmCC}, to show (\ref{CM}) is equivalent to prove the following identity
\begin{equation}\label{CM2}
\cC_n^{(a,b)}=aM_{n-1}^{(a+b,ab)}.
\end{equation}

Let $\Mot_n(\{a,b\},a,b)$ denote the set of all weighted $n$-ordered Motzkin paths, with each $\E_2$ step  assigned with weight $a$, each $\N_2$ step assigned with weight $b$, and each $\D$ step assigned with weight either $a$ or $b$.
For a path $P\in \Mot_{n-1}(\{a,b\},a,b)$, we can remove the weights of $\N_2$ step and reassign each
$\E_2$ step with weight $ab$. Then we get an $(\{a,b\},ab)$-Motzkin path of order $n-1$.
Evidently the two paths have the same weight, since the number of $\N_2$ steps and $\E_2$ steps in a Motzkin path are always equal.
So we have
$$
\sum_{P\in\Mot_{n-1}(\{a,b\},a,b)}\wt(P)=\sum_{P\in\Mot_{n-1}(\{a,b\},ab)}\wt(P).
$$
On the other hand, from the discussion in the proof of Lemma \ref{Mna12L}, we have
$$
M_{n-1}^{(a+b,ab)}=\sum_{P\in\Mot_{n-1}(a+b,ab)}\wt(P)
=\sum_{P\in\Mot_{n-1}(\{a,b\},ab)}\wt(P).
$$

It suffices to give a bijection $\psi:\,\CAT_n(a,b)\to\Mot_{n-1}(\{a,b\},a,b)$.
For a path $P=p_1p_2\ldots p_{2n}\in \CAT_n(a,b)$, we know that $p_1=\N$ and $p_{2n}=\E$ with weight $a$.
Let $\psi(P)=q_1q_2\ldots q_{n-1}\in \Mot_n(\{a,b\},a,b)$ be  given as follows: for $1\leq i\leq n-1$,  \medskip

$p_{2i}p_{2i+1}=\N\N$ with weight $b$ if and only if  $q_{i}=\N_2$  with weight $b$;
\medskip

$p_{2i}p_{2i+1}=\E\E$ with weight $a$ if and only if  $q_{i}=\E_2$  with weight $a$;
\medskip

$p_{2i}p_{2i+1}=\N\E$ with weight $b$ if and only if  $q_{i}=\D$  with weight $b$;
\medskip

$p_{2i}p_{2i+1}=\E\N$ with weight $a$ if and only if  $q_{i}=\D$  with weight $a$.
\medskip

Clearly $\psi$ is a bijection. Since the last step $p_{2n}$ is always $\E$ with weight $a$, we have $\wt(P)=a\cdot \wt(\psi(P))$.
Therefore, we have
$$
\cC_n^{(a,b)}=\sum_{P\in\CAT_n(a,b)}\wt(P)=a\bigg(\sum_{\psi(P)\in\Mot_{n-1}(\{a,b\},a,b)}\wt(\psi(P))\bigg)
=aM_{n-1}^{(a+b,ab)}.
$$
\end{proof}

\begin{proof}[Proof of (\ref{SC})]
A peak in a Catalan path is defined to be an $\N$ step followed immediately by an $\E$ step.
Assume that $P=p_1p_2\cdots p_{2n}$ is a Catalan path of order $n$.
For $1\leq i\leq n-1$, assign each $\E$ step $p_i$ with weight $b$
provided that $p_{i}p_{i+1}=\N\E$ which forms a peak in $P$.
Also, assign each of the rest $\E$ step of $P$ with weight $a$.
Thus we get a {\it peak type $(a,b)$-Catalan path}.
Let $\overline{\Cat}_n(a,b)$ be the set of all peak type $(a,b)$-Catalan paths of order $n$,
and $\peak(P)$ denote the number of all peaks in $P$. Clearly $\peak(P)=\valley(P)+1$.
Hence
\begin{align}\label{pvcatalan}
\sum_{P\in\overline{\Cat_n}(a,b)}\wt(P)=&
\sum_{P\in\overline{\Cat_n}(a,b)}a^{n-\peak(P)}b^{\peak(P)}\notag\\
=&
\sum_{P\in\Cat_n(a,b)}a^{n-1-\valley(P)}b^{\valley(P)+1}\notag\\
=&\frac{b}{a}\sum_{P\in\Cat_n(a,b)}a^{n-\valley(P)}b^{\valley(P)}=
\frac{b}{a}\cdot C_n^{(a,b)}.
\end{align}

For $P$ an $n$-ordered Catalan path, if we assign each $\E$ step of a peak with weight either $b_1$ or $b_2$, and each other $\E$ step of $P$ with weight $a$, then the resultant weighted path is called  a peak type $(a,\{b_1,b_2\})$-Catalan paths of order $n$.
Let $\overline{\Cat}_n(b,\{a,b\})$ denote the set of all peak type $(b,\{a,b\})$-Catalan paths of order $n$.
For a peak type $(b, a+b)$ Catalan path $P$,  by splitting the weight $a+b$ into the summation of $a$ and $b$,
we can get $2^{\peak(P)}$ peak type $(b,\{a,b\})$-Catalan paths.
So
\begin{align}\label{pabcatalan}
\sum_{P\in\overline{\Cat}_n(b,a+b)}\wt(P)=\sum_{P\in\overline{\Cat}_n(b,\{a,b\})}\wt(P).
\end{align}

In view of (\ref{pvcatalan}) and (\ref{pabcatalan}), to prove (\ref{SC}), we only need to find a bijection from $\Sch_n(a,b)$
to $\overline{\Cat}_n(b,\{a,b\})$. Such a bijection can be constructed in a natural way.
For an $(a,b)$-Schr\"{o}der path $P\in \Sch_n(a,b)$, since each $\D$ step in $P$ is assigned with weight $a$, we can then
replace each $\D$ step by an adjacent $\E \N$ pair and assign the $\E$ step of this peak with weight $a$. Doing this replacement for all $\D$ steps in the $(a,b)$-Schr\"{o}der path $P$. We get a peak type $(b,\{a,b\})$-Catalan path.
Conversely, for a peak type $(b,\{a,b\})$-Catalan path, we can replace each adjacent $\E \N$ pair by a $\D$ step with weight $a$, provided that the $\E$ step is assigned with weight $a$. We also can get an $(a,b)$-Schr\"oder path. Thus we surely obtain a bijection from $\Sch_n(a,b)$ to $\overline{\Cat}_n(b,\{a,b\})$. It follows that
$$
\sum_{P\in \Sch_n(a,b)}\wt(P)=\sum_{P\in\overline{\Cat}_n(b,\{a,b\})}\wt(P)=
\sum_{P\in\overline{\Cat}_n(b,a+b)}\wt(P)=
\frac{a+b}{b}\cdot C_n^{(b,a+b)},
$$
and therefore~(\ref{SC}) holds.
\end{proof}
\begin{remark}
It can be seen that (\ref{SM}) easily follows from (\ref{CM}) and~(\ref{SC}).
In fact, using the same way in~\cite{Y07}, we can also give a direct bijection proof of~(\ref{SM}).
However, since the proof is a little bit tedious and contains almost no new ideas, we omit it in this paper.
\end{remark}


\begin{thebibliography}{XX}


\bibitem{BSS93}
J. Bonin, L. Shapiro and R. Simion, {\it Some $q$-analogues of the Schr\"oder numbers arising from combinatorial statistics on lattice paths}, J. Statist. Plann. Inference., {\bf 34} (1993), 35--55.


\bibitem{C03} C. Coker, {\it Enumerating a class of lattice paths}, Discrete Math., {\bf 271} (2003), 13--28.

\bibitem{CW12} William Y. C. Chen and  Carol J. Wang,  {\it Noncrossing linked partitions and large $(3,2)$-Motzkin paths}, Discrete Math., {\bf 312} (2012), 1918--1922.

\bibitem{CYY08} William Y. C. Chen, Sherry H. F. Yan and Laura L. M. Yang, {\it Identities from weighted Motzkin paths}, Adv. Appl. Math., {\bf 41} (2008), 329--334.

\bibitem{DV84} M. Delest and G. Viennot, {\it Algebraic languages and polyominoes enumeration}, Theoret. Comput. Sci., {\bf 34} (1984), 169--206.



\bibitem{DS77}
R. Donaghey and L. Shapiro, {\it Motzkin numbers}, J. Combin. Theory Ser. A, {\bf 23} (1977), 291--301.



\bibitem{P15}
T. K. Petersen, {\it Eulerian numbers},  Birkh\"auser/Springer, New York, 2015..


\bibitem{S99}
R. P. Stanley, {\it Enumerative combinatorics, Vol. 2}, Cambridge University Press, Cambridge, 1999.


\bibitem{SS00}
L.W. Shapiro, R.A. Sulanke, {\it Bijections for the Schr\"oer numbers}, Math. Mag., {\bf 73} (2000), 369--376.

\bibitem{Y07}
S. H. F. Yan, {\it From (2, 3)-Motzkin paths to Schr\"{o}der paths}, J. Integer Seq., {\bf 10} (2007), Article 07.9.1.

\end{thebibliography}
\end{document}